\newtheorem{theorem}{Theorem}
\newtheorem{example}{Example}
\newtheorem{lemma}{Lemma}
\newtheorem{corollary}{Corollary}
\newenvironment{ex}
  {\noindent\begin{example}\begin{em}}
  {\end{em}\hfill$\blacktriangleleft$\end{example}}
\def \be{\begin{equation}}
\def \ee{\end{equation}} 
\def \dist{\rightsquigarrow} 
\newcommand{\by}{\mathbf{y}}
\newcommand{\bz}{\mathbf{z}}
\newcommand{\vs}{\vspace{0.5cm}}
\newcommand{\E}{\mathbb{E}}
\renewcommand{\P}{\mathbb{P}}
\newcommand{\R}{\mathbb{R}}
\newcommand{\Z}{\mathbb{Z}}
\newcommand{\btheta}{\boldsymbol{\theta}}
\newcommand{\feta}{\boldsymbol{T}}
\def\1{1\!{\rm l}}  
\newcommand{\model}{\mathfrak{M}}
\title[Relevant statistics for Bayesian model choice]{Relevant statistics for Bayesian model choice}
\author{Jean-Michel Marin}
\address{I3M, UMR CNRS 5149, Universit\'e Montpellier 2, France.}
\email{jean-michel.marin@math.univ-montp2.fr}
\author{Natesh S.~Pillai}
\address{Department of Statistics, Harvard University, Cambridge, USA.}
\email{pillai@fas.harvard.edu}
\author{Christian P.~Robert}
\address{Universit\'e Paris-Dauphine, University of Warwick, and CREST, Paris, France.}
\email{xian@ceremade.dauphine.fr,robert@ensae.fr}
\author[Rousseau {\it et al.}]{Judith Rousseau}
\address{Universit\'e Paris Dauphine, CEREMADE, ENSAE and CREST, Paris, France.}
\email{rousseau@ensae.fr}
\begin{document}

\maketitle

\begin{abstract} 
The choice of the summary statistics used in Bayesian inference and in particular in ABC algorithms has
bearings on the validation of the resulting inference. Those statistics are nonetheless customarily used in
ABC algorithms without consistency checks. We derive necessary and sufficient conditions on summary statistics 
for the corresponding Bayes factor to be convergent, namely to asymptotically select the true model. Those 
conditions, which amount to the expectations of the summary statistics differing asymptotically under the two 
models, are quite natural and can be exploited in ABC settings to infer whether or not a choice of summary statistics is 
appropriate, via a Monte Carlo validation.
\end{abstract}

\keywords{likelihood-free methods, Bayes factor, ABC, Bayesian model choice, sufficiency,
Gaussianity, asymptotics, ancillarity.}

\section{Introduction}
\subsection{Summary statistics}

In \cite{robert:cornuet:marin:pillai:2011}, the authors showed that the now popular ABC (approximate Bayesian
computation) method \citep{tavare:balding:griffith:donnelly:1997,pritchard:seielstad:perez:feldman:1999,
toni:etal:2009} is not necessarily validated when applied to Bayesian model
choice problems, in the sense that the resulting Bayes factors may fail to pick the correct model even
asymptotically. 

The ABC algorithm is progressively getting accepted as a necessary component of
the Bayesian toolbox for handling intractable likelihoods. Since it is not the
central topic of this article, but rather both a motivation and an immediate
application domain for our derivation, we do not embark upon a complete
description of its implementation, referring to
\cite{marin:pudlo:robert:ryder:2011} and \cite{fearnhead:prangle:2012} for
details.  We simply recall here that the core feature of this approximation
technique is to run simulations $(\btheta,\bz)$ from the prior distribution and
the corresponding sampling distribution until a statistic $\feta(\bz)$ of the
simulated pseudo-data $\bz$ is close enough to the corresponding value of the
statistic $\feta(\by)$ at the observed data $\by$. The degree of proximity
(also called the tolerance) can be improved by an increase in the computational
power. However the choice of the statistic $\feta$ is particularly crucial in
that the resulting (approximately Bayesian) inference relies on this statistic
and only on this statistic. It thus impacts the resulting inference much more
than the choices of the tolerance distance and of the tolerance value.

When conducting ABC model choice
\citep{grelaud:marin:robert:rodolphe:tally:2009}, the outcome of the
ideal algorithm associated with zero tolerance and zero Monte Carlo error
is the Bayes factor
$$
B^{\feta}_{12}(\by)=\dfrac{\int \pi_1(\btheta_1)g_1^{\feta}(\feta(\by)|\btheta_1)\,\text{d}\btheta_1}
                          {\int \pi_2(\btheta_2)g_2^{\feta}(\feta(\by)|\btheta_2)\,\text{d}\btheta_2}\,,
$$
namely the Bayes factor for testing $\model_1$ versus $\model_2$ based on the sole observation
of $\feta(\by)$. This value most often differs from the Bayes factor $B_{12}(\by)$ based on the whole data
$\by$. As discussed in \cite{didelot:everitt:johansen:lawson:2011} and
\cite{robert:cornuet:marin:pillai:2011}, in the specific case when the statistic $\feta(\by)$ is sufficient for
both $\model_1$ and $\model_2$, the difference between both Bayes factors can be expressed as
\be\label{eq:summayes}
B_{12}(\by) = \dfrac{h_1(\by)}{h_2(\by)}\,B^{\feta}_{12}(\by)\,,
\ee
where the ratio of the $h_i(\by)$'s often behaves like a likelihood ratio of the same order as the data size $n$.  The
discrepancy revealed by the above is such that ABC model choice cannot be trusted without further checks.
Indeed, even in the limiting ideal case, i.e.~when the ABC algorithm 
achieves a zero tolerance, the ABC odds ratio does not take into account the features of the data besides the
value of $\feta(\by)$.  \cite{robert:cornuet:marin:pillai:2011} warn that this difference can be such that
$B^{\feta}_{12}(\by)$ leads to an inconsistent model choice. (The same is obviously true for point estimation,
e.g.~when considering the extreme case of an ancillary $\feta(\by)$.) This is also the reason
why \cite{ratmann:andrieu:wiujf:richardson:2009,ratmann:andrieu:wiujf:richardson:2010} consider the alternative
approach of assessing each model on its own under several divergence measures, defining a new algorithm 
they denote $\text{ABC}_\mu$.

Beyond ABC applications, note that many fields report summary statistics in
their publications rather than the raw data, for various reasons ranging from
confidentiality to storage, to proprietary issues. For instance, a dataset may
be replaced by several $p$-values, $p_i(\by)$, against several specific
hypotheses. Handling a model choice problem based solely on
$\feta(\by)=(p_1(\by),\ldots,p_k(\by))$ is therefore a relevant issue, with the
coherence of the corresponding Bayes factor at stake.

Another relevant instance outside the ABC domain is provided in
\cite{dickey:gunel:1978}, who exhibit the above differences in the Bayes
factors when using a non-sufficient statistic, including an example where the
limiting Bayes factor, as the sample size grows to infinity, is $0$ or $\infty$.
Similarly, \cite{walsh:raftery:2005} compare point processes via Bayes factors
constructed on summary statistics. They discuss those summary statistics (second
order statistics and some based on Vorono{\"\i} tesselations) depending on the
misclassification rates of the corresponding Bayes factors through a simulation
study. However, the connection with the genuine Bayes factor is not pursued. (A
connection with the ABC setting appears in the conclusion of the paper, though,
with a reference to \cite{diggle:graton:1984} which is often credited as one
originator of the method.)

The purpose of the current paper is to study asymptotic conditions on the
statistic $\feta$ under which the Bayes factor $B^{\feta}_{12}(\by)$ either
converges to the correct answer or it does not. We obtain a precise
characterisation of consistency in terms of the limiting distributions of the
summary statistic $\feta(\by)$ under both models, namely that the true
asymptotic mean of the summary statistic $\feta(\by)$ cannot be recovered under
the wrong model, except for nested models. As explained below, this
characterisation shows that using point estimation statistics as summary
statistics is rarely pertinent for testing while ancillary statistics are more
likely candidates, at least formally. Once stated, the condition on the
statistic $\feta$ is quite natural in that the Bayes factor will otherwise
favour the simplest model. Our main result implies that a validation of summary
statistics providing convergent model choice is available for ABC algorithms.
The practical side is computational in that the mean values of the summary
statistics can be checked by simulation. Further properties of the vector of
summary statistics can also be tested via these simulations, including the
comparison of several summary statistics or, equivalently, the selection of the
most discriminant components of the above vector.

\subsection{Insufficient statistics}
\label{subsec:insu}

The above connection \eqref{eq:summayes} between the Bayes factor based on the whole data $\by$ and
the Bayes factor based on the summary $\feta(\by)$ is only valid when the
latter is sufficient for both models. In this setting, and only in this
setting, the ratio of the $h_i$'s in \eqref{eq:summayes} is equal to one solely
when the statistic $\feta$ is furthermore sufficient across models $\model_1$
and $\model_2$, i.e.~for the collection $(m,\btheta_m)$ of the model index and
of the parameter. A rather special instance where this occurs is the case of
Gibbs random fields \citep{grelaud:marin:robert:rodolphe:tally:2009}.
Otherwise, the conclusion drawn from using $\feta(\by)$ necessarily differs from the
conclusion drawn from using $\by$. The same is obviously true outside the sufficient
case, which implies that the selection of a summary statistic must be evaluated
against its performances for model choice, because it is not guaranteed per se.
The following example illustrates this point:

\begin{ex}\label{laplace-vs-gaussian} To illustrate the impact of the choice of a summary statistic on the Bayes factor, we
consider the comparison of model $\model_1$: $\by\sim\mathcal{N}(\theta_1,1)$ with model $\model_2$:
$\by\sim\mathcal{L}(\theta_2,1/\sqrt{2})$, the Laplace or double exponential distribution with mean $\theta_2$
and scale parameter $1/\sqrt{2}$, which has a variance equal to one. Since it is irrelevant for consistency
issues, we assume throughout the paper that the prior probabilities of both models $\model_1$ and $\model_2$
are equal to $1/2$.

In this formal setting, we considered the following statistics:
\begin{enumerate}
\item[--] the sample mean $\overline \by$;
\item[--] the sample median $\text{med}(\by)$;
\item[--] the sample variance $\text{var}(\by)$;
\item[--] the median absolute deviation $\text{mad}(\by)=\text{med}(|\by-\text{med}(\by)|)$;
\item[--] the sample fourth moment $n^{-1} \sum_{i=1}^n y_i^4$;
\item[--] the sample sixth moment $n^{-1} \sum_{i=1}^n y_i^6$.
\end{enumerate}
Given the models under comparison, the first statistic is sufficient only for the Gaussian model,
the second, fifth and sixth statistics are not sufficient but their distributions depend on $\theta_i$ in both models, 
while both the sample variance and the median absolute deviation are ancillary statistics. 

As explained later in Section \ref{sec:consequence}, the most important feature
of those statistics is that all statistics but the fourth one 
have the same expectation under both models (when using appropriate values of
the $\theta_i$'s) while the median absolute deviation always has a different
expectation under model $1$ and model $2$.

Since we are facing standard models in this artificial example, the analytic
computation of the true Bayes factor would be possible, even in the Laplace
case. However, if we base our inference only on one or several of the above
statistics, the computation of the corresponding Bayes factors requires an ABC
step.  Fig.  \ref{fig:norla1} shows the distribution of the posterior
probability that the model is normal (as opposed to Laplace) when the data is
either normal or Laplace and when the summary statistic in the ABC algorithm is
the collection of the first three statistics above. The outcome is thus that
the estimated posterior probability has roughly the same predictive
distribution under both models, hence ABC based on those summary statistics is
not discriminative. Fig.  \ref{fig:norla2} represents the same outcome when the
summary statistic used in the ABC algorithm is only made of the median absolute
deviation of the sample. In this second case, the two distributions of the
estimated posterior probability are quite opposed under each model,
concentrating near zero and one as the number of observations $n$ increases,
respectively. Hence, this summary statistic is highly discriminant for the
comparison of the two models. From an ABC perspective, this means that using
the median absolute deviation is then satisfactory, as opposed to the first
three statistics. Finally, Fig. \ref{fig:4th} and \ref{fig:4+6th} represents
the same outcome when the summary statistics used in the ABC algorithm are
respectively the empirical fourth moment and both the empirical fourth and
sixth moments. When using solely the empirical fourth moment, the posterior
probability for the normal model is highly concentrated near 1 when the
observations are normally distributed, while the posterior probability for the
normal model slowly decreases to zero with the number of observations when they
are Laplace distributed. When using both the fourth and the sixth moments, the
convergence (to zero) in the Laplace case occurs faster. We note that the
distance used for the latter case is an Euclidean distance with weights $1$ and
$1/100$ on the fourth and the sixth components, in order to compensate for the
one-hundred-fold larger values of the square differences of the sixth moments.
Using a regular Euclidean distance led to account only for the empirical sixth
moment statistic.
In Section \ref{subsec:GaLap}, the experimental results
obtained in Fig. \ref{fig:4th} and \ref{fig:4+6th} will be analysed in terms of
theoretical results of \ref{sec:consequence}.  \end{ex}

\begin{figure}[ht]
 \centerline{\includegraphics[width=9truecm]{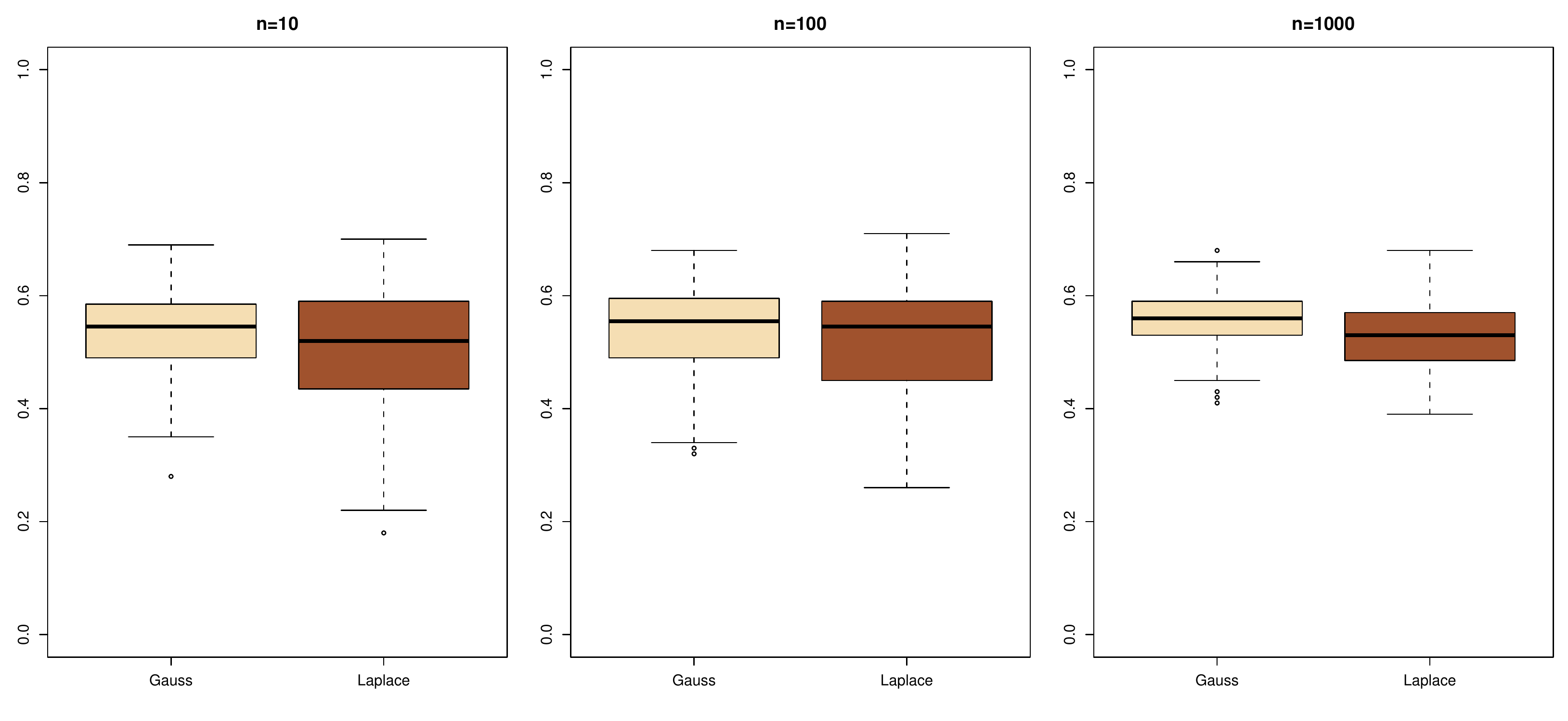}}
\caption{Comparison of the distributions of the posterior probabilities that the data is from a normal model
{\em (as opposed to a Laplace model)} with unknown mean $\theta$ when the data is made of $n=10,100,1000$ observations 
{\em (left, centre, right, resp.)} either from a Gaussian or
Laplace distribution with mean equal to zero and when the summary statistic in the ABC algorithm is 
the vector made of the collection of \textbf{the sample mean, median, and variance}. The ABC algorithm uses a
reference table of $10^4$ simulations ($5,000$ for each model) from the prior $\theta\sim \mathcal{N}(0,4)$
and selects the tolerance $\epsilon$ as the $1\%$ distance quantile over those simulations.}
\label{fig:norla1}
\end{figure}

\begin{figure}[ht]
 \centerline{\includegraphics[width=9truecm]{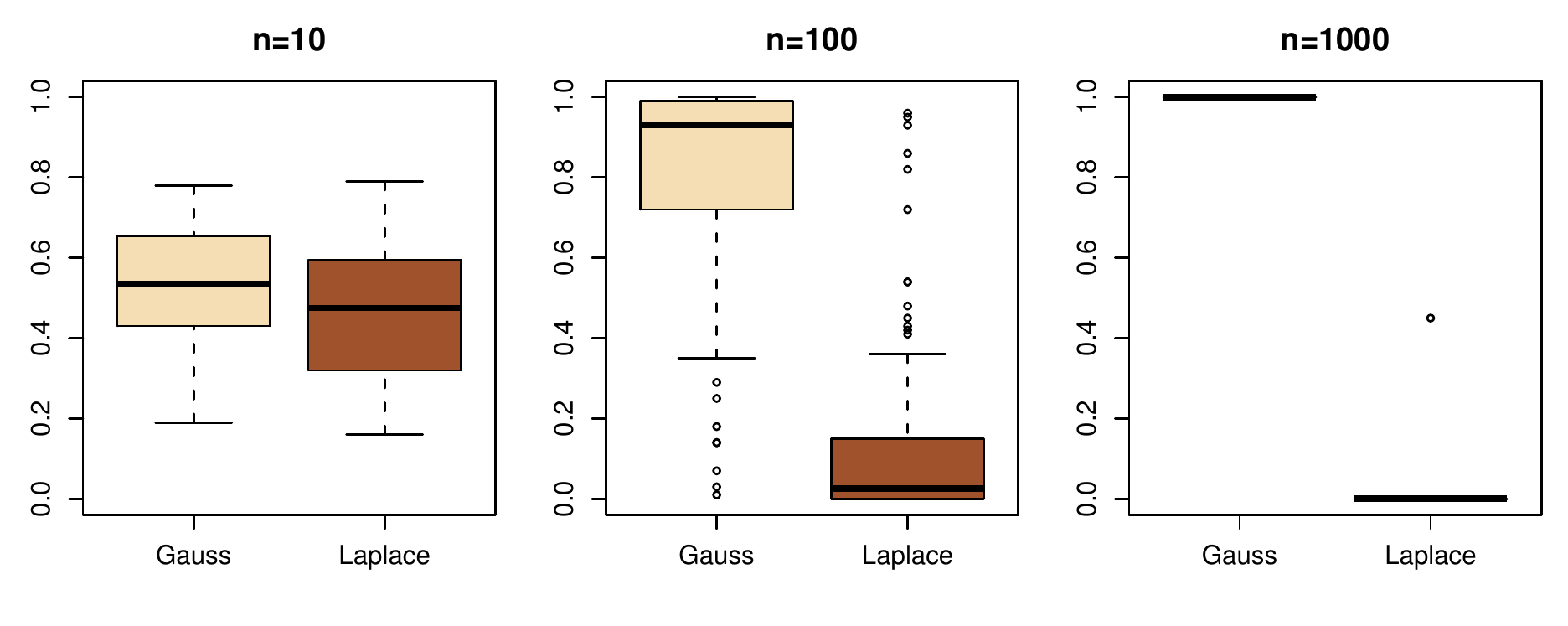}}
\caption{Same legend and same calibration as in Fig. \ref{fig:norla1} when the ABC algorithm is 
based on \textbf{the median absolute deviation} of the sample as its sole summary statistic.}
\label{fig:norla2}
\end{figure}

\begin{figure}[ht]
\centerline{\includegraphics[height=4truecm]{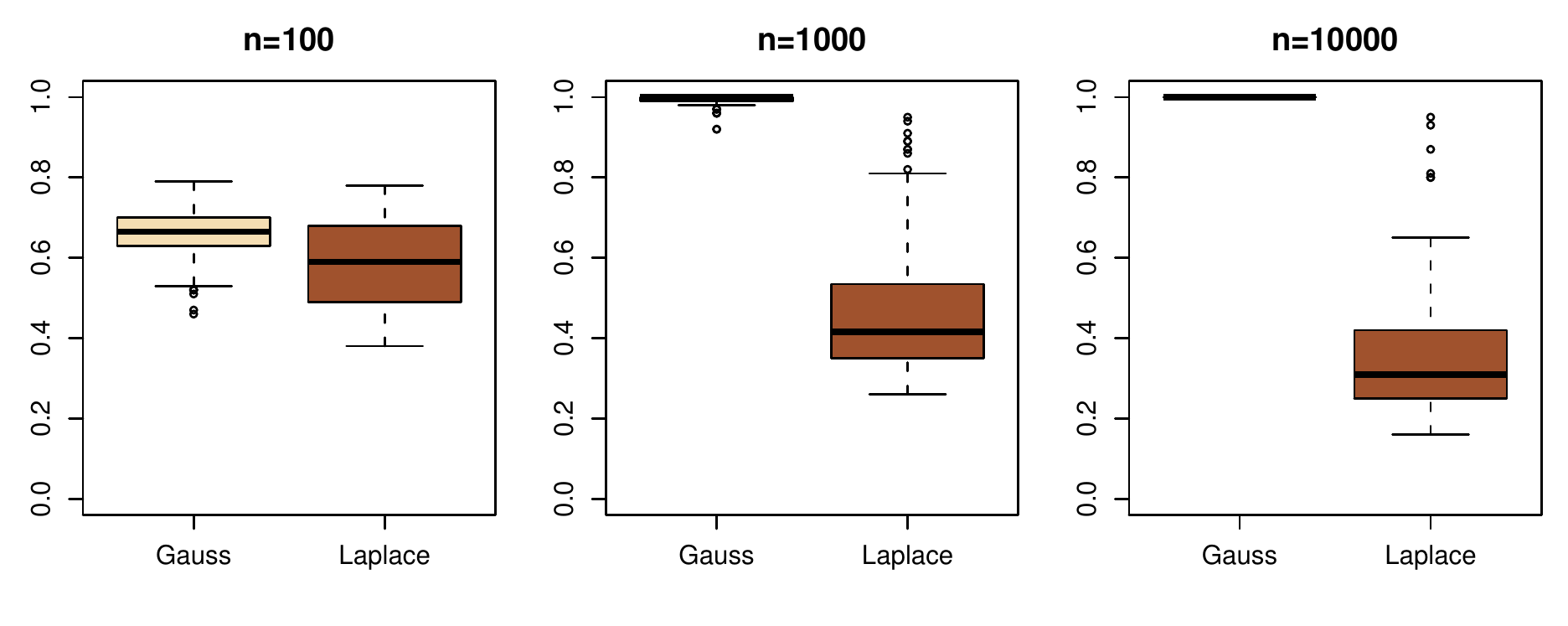}}
\caption{Same legend and same calibration as in Fig. \ref{fig:norla1},
for $n=100,1000,10000$ observations, when the ABC algorithm is 
based on \textbf{the empirical fourth moment} as its sole summary statistic.}
\label{fig:4th}
\end{figure}

\begin{figure}[ht]
\centerline{\includegraphics[height=4truecm]{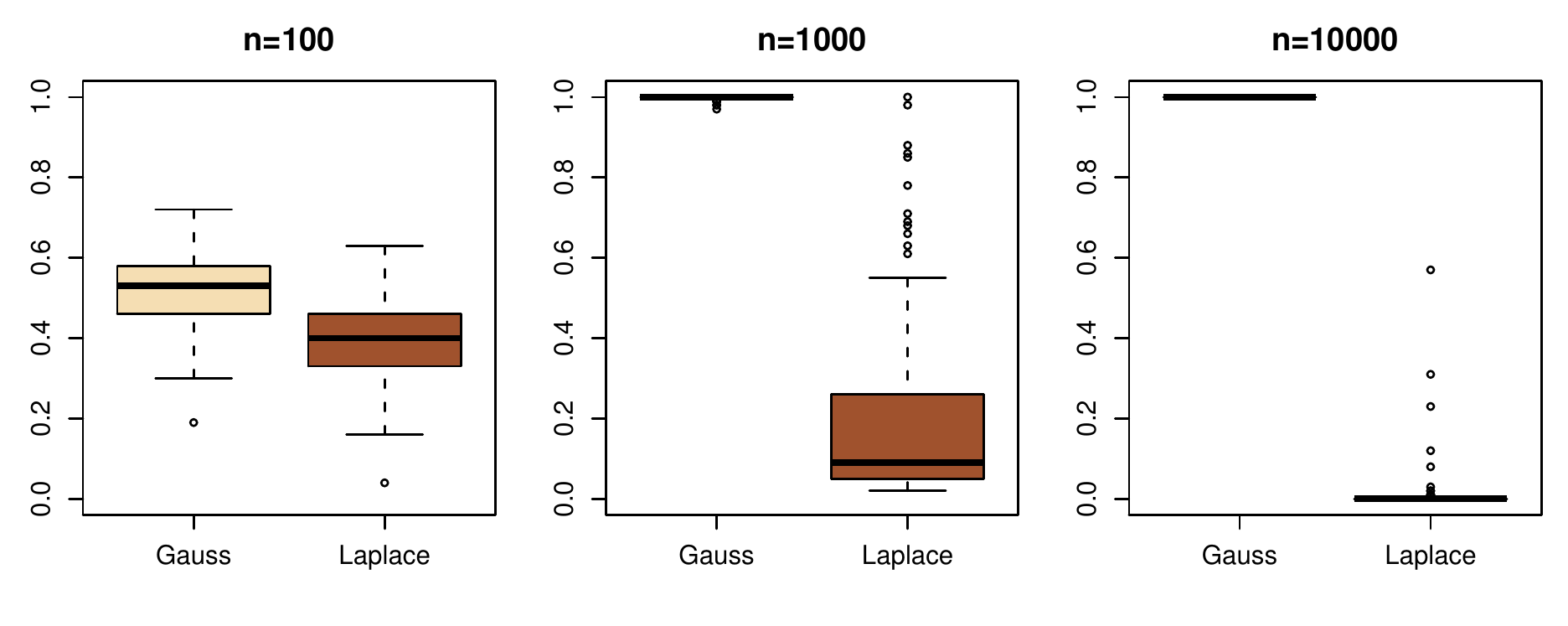}}
\caption{Same legend as in Fig. \ref{fig:4th} when the ABC algorithm is based on both
\textbf{the fourth and sixth empirical moments} as summary statistics.}
\label{fig:4+6th}
\end{figure}

The above example illustrates very clearly the major result of this paper, namely that the mean behaviour of
the summary statistic $\feta(\by)$ under both models under comparison is fundamental for the convergence of the
Bayes factor, i.e.~of the Bayesian model choice based on $\feta(\by)$. This result, described in the next
section, thus brings an answer to the question raised in \cite{robert:cornuet:marin:pillai:2011}
about the validation of ABC model choice, although it may require additional simulation experiments in
realistic situations.

The paper is organised as follows: Section \ref{sec:ZeSection} contains the
theoretical derivation of the asymptotic behaviour of the Bayes factor $B^{\feta}_{12}(\by)$,
Section \ref{sec:asmpt} covering our main assumptions and
exhibiting the asymptotic behaviour of the marginal likelihoods, Section
\ref{sec:consequence} detailing the consequences of this result for model
choice based on summary statistics.  Section \ref{sec:illustre} illustrates the
relevance of our criterion for evaluating summary statistics, including a non-trivial
population genetics example. Section \ref{sec:checkstat} details the practical implementation
of a validation mechanism based on the above results. Section \ref{seccon}
concludes the paper with a short discussion. 

\section{Convergence of Bayes factors using summary statistics}\label{sec:ZeSection}

\vs Let $\by = (y_1,\ldots,y_n)$ be the observed sample, not necessarily iid.  We denote by $\by \sim \P^n$ the
true distribution of the sample, and by $\feta(\by)=\feta^n=(T_1(\by), T_2(\by),\cdots,T_d(\by))$ a
$d$-dimensional vector of summary statistics, $\feta^n\sim G_n$. The distribution $G_n$ is the projection of
$\P^n$ under the map $\feta^n: \mathbb{R}^n \mapsto \mathbb{R}^d$ and we denote its density by $g_n$. 

There are two competing models $\model_1$ and $\model_2$ that we wish to compare:
\begin{itemize}
\item[--] under $\model_1$, $\by\sim F_{1,n}(\cdot|\theta_1)$ where $\theta_1\in\Theta_1 \subset \R^{p_1}$,
\item[--] under $\model_2$, $\by\sim F_{2,n}(\cdot|\theta_2)$ where $\theta_2\in\Theta_2 \subset \R^{p_2}$.
\end{itemize}

The distributions of $\feta^n$ under $\model_1$ and $\model_2$ are denoted by $G_{1,n}(\cdot|\theta_1)$ and 
$G_{2,n}(\cdot|\theta_2)$, respectively. We also assume that the distribution functions $F_{i,n}(\cdot|\theta_i)$,
$G_{i,n}(\cdot|\theta_i)$ have densities $f_{i}(\cdot|\theta_i)$ and $g_{i}(\cdot|\theta_i)$ with respect
to some dominating measures $(i=1,2)$, respectively. 
Under the respective prior distributions $\pi_1$ and $\pi_2$ on
$\theta_1$ and $\theta_2$, the posterior distributions given $\feta^n$ are denoted by $\pi_1(\cdot|\feta^n)$ and $\pi_2(\cdot|\feta^n)$. 

\subsection{Assumptions and asymptotic behaviour of the marginal likelihoods}\label{sec:asmpt}
Before stating the main result in the paper, we detail theoretical assumptions on both the models and the
summary statistics under which the main result holds. 

We start with a brief primer on our notations. The letter $C$ denotes a generic positive constant (independent
of $n$), whose value may change from one occurrence to the next, but is  of no consequence.  
We write $a\wedge b$ to denote $\min (a,b)$. For two sequences $\{a_n\},\{b_n\}$ of real numbers, 
$a_n \lesssim b_n$ (resp. $\gtrsim$) means $a_n \leq C b_n$ (resp. $a_n \geq Cb_n)$. Similarly,
$a_n \sim b_n$ means that 
$$
{1/C} \leq \liminf_{n \rightarrow \infty} |a_n/b_n|\leq \limsup_{n \rightarrow \infty} |a_n/b_n| \leq C \;.
$$   
The symbol $\dist$ denotes convergence in distribution. 

Technical assumptions that are necessary for establishing the main result of the paper are as follows:
\begin{enumerate}
\renewcommand{\theenumi}{{\bf A\arabic{enumi}}}

\item\label{A1} There exist a sequence of positive real numbers $\{v_{n}\}$ converging to $+\infty$,  
a distribution $Q$ on $\mathbb{R}^d$, 
and a vector $\mu_0 \in \mathbb{R}^d$, such that
$$
v_{n}  (\feta^n-\mu_0) \dist Q, 
\quad \mbox{ under } G_n  \;.
$$


\item \label{A3} For $i\in \{ 1,2\}$, there exist sieves $\mathcal F_{n,i} \subset \Theta_i$  
and constants $\tau_i , \alpha_i, C_i, x_{0,i}>0 $, such that 
\begin{equation}\label{mass:Fni}
\pi_i(\mathcal F_{n,i}^c ) = o( v_n^{-\tau_i})\,.
\end{equation}
For all $\theta_i \in \mathcal F_{n,i}$, the asymptotic means $\mu_i(\theta_i) \in \mathbb{R}^d$ 
of $\feta^n$ under this model satisfy: for all  $0< x < x_{0,i}v_n$
\begin{equation}\label{tests}
G_{i,n} \Big[ v_n |\feta^n - \mu_i(\theta_i)| > x \,\Big| \theta_i \Big] \leq C_i x^{-\alpha_i}\,.
\end{equation}
We define the sets $S_{n,i} \subset \mathcal{F}_{n,i}$ $(i = 1,2)$ as
$$
S_{n,i}(u) = \big \{ \theta_i \in \mathcal F_{n,i}; |\mu_i(\theta_i) -\mu_0| \leq u \,v_n^{-1} \big\}, \quad u > 0 \;.
$$
\end{enumerate}

\medskip
We say that $\model_i$ is {\em compatible with} $\feta^n$ if 
$$
\inf\{ |\mu_i(\theta_i) - \mu_0 | ; \theta_i \in \Theta_i\} = 0\,,
$$
meaning that the asymptotic mean of $\feta^n$ is found within the range of the means of $\feta^n$ in model
$\model_i$.

\begin{enumerate}
\renewcommand{\theenumi}{{\bf A\arabic{enumi}}}
\setcounter{enumi}{2}

\item\label{A4}   
If $\model_i$ is compatible with $\feta^n$,
then there exists a constant 
$
d_i < \tau_i \wedge (\alpha_i - 2)
$
such that
\begin{eqnarray} \label{eqn:A5e2}
\pi_i( S_{n,i}(u) ) \sim u^{d_i}v_n^{-d_i} , \quad \forall u \lesssim v_n \;.
\end{eqnarray}

\item\label{A5} If 
$\model_i$ is compatible with $\feta^n$,
then for any $\epsilon > 0$ there exist $U, \delta >0$  and a set 
$ E_n$ such that for all $\theta_i \in S_{n,i}(U)$
\begin{equation} \label{eqn:A6eq1}
E_n \subseteq \{ t; g_{i}(t|\theta_i)  \geq \delta  g_n(t) \} 
\quad \text{and} \quad G_n\left( E_n^c \right) < \epsilon\,.
\end{equation}
\end{enumerate}

Even though these assumptions might appear overwhelming, we claim that
(\ref{A1})---(\ref{A5}) are both mild and relatively easy to check in applications.
A detailed discussion on those assumptions is provided in Section
\ref{subsec:ass}. Furthermore, we will later illustrate why they hold in 
both the Gaussian versus Laplace example (Section \ref{subsec:GaLap}) and a
realistic population example (Section \ref{sec:popgenX}). 

The following result provides a fundamental control on the convergence rate of the marginal likelihoods.  
In Lemma \ref{thm:normasympt}, $m_{1}(\cdot)$ and $m_{2}(\cdot)$ denote the
marginal densities of $\feta^n$ under models $\model_1$ and $\model_2$, respectively, namely $(i=1,2)$
\begin{equation}\label{eqn:ML}
m_i(t) = \int_{\Theta_i} g_i(t|\theta_i)\,\pi_i(\theta_i)\,\text{d}\theta_i\,.
\end{equation}

\begin{lemma}\label{thm:normasympt}
Under assumptions (\ref{A1})--(\ref{A5}), for $i=1,2$, there exist constants $C_l, C_u = O_{\P^n}(1)$ such that, if 
$\model_i$ is compatible with $\feta^n$,
and $ \tau_i > d_i$, $\alpha_i> d_i+2$,
\begin{equation} \label{th1:1}
C_l v_n^{-d_i}  \leq \frac{m_i(\feta^n) }{ g_n(\feta^n) } \leq C_u v_n^{ - d_i} 
\end{equation}
and, if $\model_i$ is not compatible with $\feta^n$,
\begin{equation} \label{th1:2}
\frac{m_i(\feta^n) }{ g_n(\feta^n) }  = O_{\P^n} [v_n^{- \tau_i  } + v_n^{ - \alpha_i} ]. 
\end{equation}
\end{lemma}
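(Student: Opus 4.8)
The plan is to bound the ratio $m_i(\feta^n)/g_n(\feta^n)$ by splitting the integral defining $m_i(\feta^n)$ in \eqref{eqn:ML} over the sieve $\mathcal F_{n,i}$ and its complement, and then, on the sieve, according to the distance of $\mu_i(\theta_i)$ from the true limiting mean $\mu_0$. First I would write $m_i(\feta^n)=\int_{\mathcal F_{n,i}^c} g_i(\feta^n|\theta_i)\pi_i(\theta_i)\dd\theta_i+\int_{\mathcal F_{n,i}} g_i(\feta^n|\theta_i)\pi_i(\theta_i)\dd\theta_i$. For the first piece, (\ref{A3}) gives $\pi_i(\mathcal F_{n,i}^c)=o(v_n^{-\tau_i})$; combined with a crude bound on $g_i(\feta^n|\theta_i)/g_n(\feta^n)$ coming from the tail inequality \eqref{tests} evaluated at the observed $\feta^n$ (which, by (\ref{A1}), sits at distance $O_{\P^n}(v_n^{-1})$ from $\mu_0$), this contributes at most $O_{\P^n}(v_n^{-\tau_i})$. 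For the main piece, I would perform a dyadic (peeling) decomposition of $\mathcal F_{n,i}$ into the shells $S_{n,i}(2^{k+1})\setminus S_{n,i}(2^k)$, $k\geq 0$, plus the innermost ball $S_{n,i}(U)$: on each shell the distance $|\mu_i(\theta_i)-\mu_0|$ is of order $2^k v_n^{-1}$, so $|\feta^n-\mu_i(\theta_i)|\gtrsim 2^k v_n^{-1}$ with high $\P^n$-probability, and \eqref{tests} bounds $g_i(\feta^n|\theta_i)$ by $C_i(2^k)^{-\alpha_i}v_n^d g_n(\feta^n)$-type terms after controlling $g_n(\feta^n)$ from below via (\ref{A1}); the prior mass of each shell is $\lesssim (2^k)^{d_i}v_n^{-d_i}$ by (\ref{A4}). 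Summing the geometric-type series $\sum_k (2^k)^{d_i-\alpha_i}$ converges precisely because $\alpha_i>d_i+2>d_i$, yielding the upper bound $C_u v_n^{-d_i}$ in \eqref{th1:1}.

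For the matching lower bound in \eqref{th1:1}, I would restrict the integral in \eqref{eqn:ML} to the set $S_{n,i}(U)$ and use (\ref{A5}): on $S_{n,i}(U)$ and on the event $E_n$ (which has $G_n$-probability close to one, hence occurs with high $\P^n$-probability since the observed $\feta^n\sim G_n$), we have $g_i(\feta^n|\theta_i)\geq \delta g_n(\feta^n)$. Integrating this over $\theta_i\in S_{n,i}(U)$ and invoking the prior-mass lower bound $\pi_i(S_{n,i}(U))\gtrsim U^{d_i}v_n^{-d_i}$ from \eqref{eqn:A5e2} gives $m_i(\feta^n)\geq C_l v_n^{-d_i}g_n(\feta^n)$ on that event; the constant $C_l$ is random but $O_{\P^n}(1)$ because it depends only on whether $\feta^n\in E_n$, which holds with probability $>1-\epsilon$ for every $\epsilon$.

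For the incompatible case \eqref{th1:2}, there is no innermost ball contributing prior mass, since $\inf_{\theta_i}|\mu_i(\theta_i)-\mu_0|>0$; one splits again over $\mathcal F_{n,i}^c$ (contributing $o(v_n^{-\tau_i})$ as before) and over $\mathcal F_{n,i}$, on which $|\feta^n-\mu_i(\theta_i)|$ is bounded below by a fixed multiple of that positive infimum for large $n$, so \eqref{tests} at the largest allowed argument $x\asymp v_n$ gives the bound $C_i v_n^{-\alpha_i}$ times $v_n^d g_n(\feta^n)$, and after the same normalisation one obtains $O_{\P^n}(v_n^{-\tau_i}+v_n^{-\alpha_i})$. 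The main obstacle I anticipate is the careful handling of the normalisation by $g_n(\feta^n)$: the assumptions control $g_i(\cdot|\theta_i)$ through tail probabilities rather than pointwise density bounds, so converting \eqref{tests} into a pointwise statement about $g_i(\feta^n|\theta_i)/g_n(\feta^n)$ requires using (\ref{A1}) to pin down both the location and the local density scale of $\feta^n$ (the factor $v_n^d$ from the change of variables $v_n(\feta^n-\mu_0)\dist Q$), and making sure all the "$O_{\P^n}(1)$" constants genuinely depend only on high-probability events and not on $n$. Keeping the peeling series convergent uniformly in $n$ while absorbing these normalisation factors is where the bookkeeping is most delicate.
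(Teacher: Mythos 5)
Your lower bound in \eqref{th1:1} is exactly the paper's argument (restrict to $S_{n,i}(U)$, use (\ref{A5}) on the event $E_n$, then the prior-mass bound \eqref{eqn:A5e2}), and your overall architecture for the upper bounds --- sieve versus sieve complement, then a peeling of $\mathcal F_{n,i}$ into shells according to $|\mu_i(\theta_i)-\mu_0|$, with the series converging because $\alpha_i>d_i+2$ --- is also the paper's. But the step you yourself flag as the ``main obstacle'' is a genuine gap, not a bookkeeping issue, and the mechanism you propose for it does not work. Assumption (\ref{A3}) is a bound on tail \emph{probabilities} $G_{i,n}[v_n|\feta^n-\mu_i(\theta_i)|>x\,|\,\theta_i]$ and (\ref{A1}) is convergence in \emph{distribution}; neither yields a pointwise bound on the density $g_i(\feta^n|\theta_i)$ at the observed value, nor a lower bound on $g_n(\feta^n)$ at ``local density scale $v_n^d$'' (the limit $Q$ need not even have a density, and weak convergence gives no pointwise density control in any case). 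So the assertion that \eqref{tests} ``bounds $g_i(\feta^n|\theta_i)$ by $C_i(2^k)^{-\alpha_i}v_n^d g_n(\feta^n)$-type terms after controlling $g_n(\feta^n)$ from below via (\ref{A1})'', and the analogous pointwise conversions you invoke for the sieve complement and for the incompatible case \eqref{th1:2}, cannot be justified under (\ref{A1})--(\ref{A5}).

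The paper closes this hole by never bounding densities pointwise. For each piece $A$ of the decomposition (the sieve complement, the innermost ball $S_{n,i}(M_\delta)$, each shell $\mathcal H_j$, and the outer remainder), it bounds the $G_n$-probability that $\int_A g_i(\feta^n|\theta_i)\pi_i(\theta_i)\,\dd\theta_i$ exceeds a chosen threshold proportional to $g_n(\feta^n)$, by Markov's inequality under $G_n$ restricted to the high-probability event $\{v_n|\feta^n-\mu_0|\le M_\delta\}$ from (\ref{A1}), followed by Fubini. In that expectation the factor $g_n(t)$ cancels with the normalising $g_n(\feta^n)$ in the threshold, leaving $\int_{\{|t-\mu_0|\le M_\delta v_n^{-1}\}} g_i(t|\theta_i)\,\dd t$, which is itself a $G_{i,n}$-probability: it is bounded by $1$ on $\mathcal F_{n,i}^c$ and on the innermost ball (where (\ref{A4}) alone, with $\tau_i>d_i$, gives the right order), and by the tail bound \eqref{tests} at distance of order $jM_\delta v_n^{-1}$ on the shell $\mathcal H_j$ (or at a fixed positive distance in the incompatible case, giving the $v_n^{-\alpha_i}$ term in \eqref{th1:2}). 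Choosing the thresholds as $M_\delta^K g_n(\feta^n)v_n^{-d_i}j^{(d_i-\alpha_i)/2}$ makes the resulting series summable exactly under $\alpha_i>d_i+2$, and all constants are controlled through $\delta$ and $M_\delta$, hence $O_{\P^n}(1)$. Unless you replace your pointwise density-ratio step by this Markov--Fubini device (or an equivalent), the upper bound in \eqref{th1:1} and the bound \eqref{th1:2} do not follow from the stated assumptions.
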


The above lemma, or more precisely \eqref{th1:1}, provides an equivalence result for the marginal densities
$m_i(\feta^n)$ when $\mu_0 \in \{ \mu_i(\theta_i), \theta_i \in \Theta_i\}$ but it does not specifically
require that $G_n$ belongs to model $\model_i$. Appendix 2 details the proof of Lemma \ref{thm:normasympt}. The
following result is a corollary on the use of $\feta^n$ for inference purposes other than model choice:

\begin{corollary} \label{corollary:1} 
Under the assumptions of Lemma \ref{thm:normasympt}, if  $\model_i$ is compatible with $\feta^n$,
the posterior distribution $\pi_i(.| \feta^n)$ concentrates at the
rate $1/v_n$ on $\{ \theta_i; \mu_i(\theta_i)= \mu_0\}$, provided $ \tau_i > d_i$ and $\alpha_i> d_i+2$. Hence,
under the posterior distribution $\pi_i(.|\feta^n) $, $\mu_i(\theta_i)$ converges to $\mu_0$ at the rate
$1/v_n$.  \end{corollary}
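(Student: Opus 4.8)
\medskip
\noindent The plan is to read the corollary off the peeling argument behind the upper bound of Lemma~\ref{thm:normasympt}. Fix $M_n\to+\infty$; we may assume $M_n\lesssim v_n$, the statement being vacuous otherwise. For $\model_i$ compatible with $\feta^n$, write the posterior mass of the complement of a shrinking neighbourhood of $\{\theta_i:\mu_i(\theta_i)=\mu_0\}$ as a ratio,
$$
\pi_i\big(S_{n,i}(M_n)^c\,\big|\,\feta^n\big)=\frac{\int_{S_{n,i}(M_n)^c} g_i(\feta^n|\theta_i)\,\pi_i(\theta_i)\dd\theta_i}{m_i(\feta^n)}\,.
$$
By \eqref{th1:1}, on an event of $\P^n$-probability tending to one one has $m_i(\feta^n)\gtrsim v_n^{-d_i}g_n(\feta^n)$; hence it is enough to show the numerator is $o_{\P^n}(v_n^{-d_i}g_n(\feta^n))$.

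Decompose $S_{n,i}(M_n)^c$ into the prior-negligible set $\mathcal F_{n,i}^c$, the dyadic shells $A_k=S_{n,i}(2^{k+1}M_n)\setminus S_{n,i}(2^kM_n)$ for $k\ge 0$ with $2^{k+1}M_n\lesssim v_n$, and a residual far region $\{\theta_i\in\mathcal F_{n,i}:|\mu_i(\theta_i)-\mu_0|\ge x_{0,i}\}$ --- exactly the partition used in the proof of Lemma~\ref{thm:normasympt}. On $\mathcal F_{n,i}^c$, Fubini and $\int_{\R^d}g_i(t|\theta_i)\dd t=1$ give $\int_{\R^d}\int_{\mathcal F_{n,i}^c}g_i(t|\theta_i)\pi_i(\theta_i)\dd\theta_i\dd t=\pi_i(\mathcal F_{n,i}^c)=o(v_n^{-\tau_i})$ by \eqref{mass:Fni}; since $\tau_i>d_i$, a Markov inequality over $\feta^n\sim G_n$ upgrades this $\dd t$-bound to $\int_{\mathcal F_{n,i}^c}g_i(\feta^n|\theta_i)\pi_i(\theta_i)\dd\theta_i=o_{\P^n}(v_n^{-d_i}g_n(\feta^n))$.

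For the shells, first localise $\feta^n$: by (\ref{A1}), $G_n(B_n^c)\to 0$ where $B_n=\{t:\,v_n|t-\mu_0|\le\tfrac12 M_n\}$, and on $B_n$ every $\theta_i\in A_k$ obeys $v_n|t-\mu_i(\theta_i)|\ge 2^kM_n-\tfrac12 M_n\ge 2^{k-1}M_n$. Fubini then turns $\int_{B_n}g_i(t|\theta_i)\dd t$ into $G_{i,n}(B_n|\theta_i)$, which the tail bound \eqref{tests} estimates, whence
$$
\int_{B_n}\int_{A_k}g_i(t|\theta_i)\,\pi_i(\theta_i)\dd\theta_i\dd t\;\lesssim\;(2^kM_n)^{-\alpha_i}\,\pi_i(S_{n,i}(2^{k+1}M_n))\;\lesssim\;(2^kM_n)^{d_i-\alpha_i}v_n^{-d_i}\,,
$$
the last step by \eqref{eqn:A5e2}; summing over $k$ the geometric series, which converges because $\alpha_i-d_i>2$, gives $\sum_k\int_{B_n}\int_{A_k}g_i(t|\theta_i)\pi_i(\theta_i)\dd\theta_i\dd t\lesssim M_n^{d_i-\alpha_i}v_n^{-d_i}=o(v_n^{-d_i})$. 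The far region is dealt with exactly as in the proof of Lemma~\ref{thm:normasympt}, contributing $O(v_n^{-\alpha_i})=o(v_n^{-d_i})$ to the analogous $B_n$-integral since $\alpha_i>d_i$. A final Markov step over $\feta^n\sim G_n$, combined with $G_n(B_n^c)\to 0$, converts the total into $\int_{S_{n,i}(M_n)^c\cap\mathcal F_{n,i}}g_i(\feta^n|\theta_i)\pi_i(\theta_i)\dd\theta_i=o_{\P^n}(v_n^{-d_i}g_n(\feta^n))$. Adding the two contributions, $\pi_i(S_{n,i}(M_n)^c|\feta^n)\to 0$ in $\P^n$-probability; as $M_n\to+\infty$ was arbitrary this is the asserted concentration at rate $v_n^{-1}$ on $\{\theta_i:\mu_i(\theta_i)=\mu_0\}$, and since $|\mu_i(\theta_i)-\mu_0|\le M_n v_n^{-1}$ throughout $S_{n,i}(M_n)$ the last sentence of the corollary follows at once.

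\textbf{Main obstacle.} The hypotheses furnish only a tail (cumulative) control \eqref{tests} on $v_n(\feta^n-\mu_i(\theta_i))$, not a pointwise upper bound on the density $g_i(t|\theta_i)$, so the numerator $\int_{A_k}g_i(\feta^n|\theta_i)\pi_i(\theta_i)\dd\theta_i$ cannot be estimated directly at the observed point $\feta^n$. The step requiring care, and the one that makes the argument go through, is to integrate $\feta^n$ out first over the high-probability neighbourhood $B_n$ of $\mu_0$, so that $\int_{B_n}g_i(t|\theta_i)\dd t$ becomes $G_{i,n}(B_n|\theta_i)$ --- precisely what \eqref{tests} controls --- and then to transfer the resulting $L^1$-bound (in $t$) back to an in-$\P^n$-probability statement about $\feta^n$ via Markov's inequality, the factor $v_n^{d_i}$ lost in that transfer being absorbed exactly by the slacks $\tau_i>d_i$ and $\alpha_i>d_i+2$. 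Since this is the same mechanism already deployed in the proof of Lemma~\ref{thm:normasympt}, the corollary is essentially a by-product of that proof, the only genuinely new ingredient being the appeal to (\ref{A1}) to localise $\feta^n$ near $\mu_0$.
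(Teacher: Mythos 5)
Your proposal is correct and takes essentially the same route as the paper: the denominator is handled by the lower bound \eqref{th1:1}, and the numerator $\int_{S_{n,i}(w_n)^c} g_i(\feta^n|\theta_i)\pi_i(\theta_i)\dd\theta_i$ is shown to be $o_{\P^n}\bigl(v_n^{-d_i}g_n(\feta^n)\bigr)$ by exactly the Fubini--Markov peeling (sieve complement via \eqref{mass:Fni}, shells around $\mu_0$ controlled by \eqref{tests} and \eqref{eqn:A5e2} after localising $\feta^n$ via (\ref{A1}), plus a far region) that the paper's proof simply imports from the proof of Lemma \ref{thm:normasympt} rather than re-deriving. Your dyadic shells versus the paper's $M_\delta$-spaced shells is only a cosmetic difference.
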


\begin{proof}
Equation \eqref{th1:1} of Lemma \ref{thm:normasympt} yields  that $$\frac{m_i(\feta^n)}{g_n(\feta^n)} \gtrsim v_n^{-d_i} \;$$
with large probability.
For all sequences $\{w_n\}_n$ converging to $+\infty $, calculations performed in the proof of Lemma \ref{thm:normasympt} 
(see Appendix 2) yield that with probability going to 1 under $G_n$,
$$
\int_{S_{n,i} (w_n)^c}\frac{ g_i(\feta^n|\theta_i)}{g_n(\feta^n) } \, 
\pi_i(\theta_i)\, \text{d}\theta_i \lesssim w_n^{-\alpha_i} v_n^{- \alpha_i}+v_n^{  - \tau_i } = o(v_n^{-d_i})\;.
$$
Therefore the  posterior distribution of $\mu_i(\theta_i)$ has its tail probability given by
$$
\pi_i(|\mu_0 - \mu_i(\theta_i)|> w_n v_n^{-1} | \feta^n )  =\frac{\int_{S_{n,i} (w_n)^c}
g_i(\feta^n|\theta_i) \pi_i(\theta_i)\,\text{d}\theta_i}{ m_i(\feta^n) } = o_{\P^n}(1)
$$
and the corollary follows. \qed \end{proof}

\medskip
Lemma \ref{thm:normasympt} helps in understanding  the meaning of the parameter $d_i$ in assumption (\ref{A4})
when $\model_i$ is compatible with $\feta^n$.
Indeed, we then have
$$
\frac{m_i(\feta^n)}{g_n(\feta^n)} \sim v_n^{-d_i}\,,
$$
thus $\log (m_i(\feta^n)/g_n(\feta^n)) \sim -d_i \log v_n$ and
$v_n^{-d_i}$ appears as a penalisation factor resulting from integrating $\theta_i$ out 
in the very same spirit as the effective number of parameters appears in the DIC \citep{spiegbestcarl}
criterion and in the discussions in \citet{rousseau:07} and \citet{mengersen:rousseau:2011}. In regular models, $d_i$
corresponds to the dimension of $\mu(\Theta_i)$, leading to the usual BIC approximation; however, in
non-regular models, which may occur with the kind of applications where ABC methods are required, $d_i$ can be
different. This is illustrated in the examples of Section \ref{sec:illustre}.  We now present the major
implication of these results on the relevance of some summary statistics to compute Bayes factors.  

\subsection{Bayes factor consistency} \label{sec:consequence}

Lemma \ref{thm:normasympt} implies that the asymptotic behaviour of the Bayes factor is driven by the
asymptotic mean value of $\feta^n$ under both models. It is usual to assume that one of the competing models
is true, when studying the behaviour of testing procedures (here posterior probabilities and Bayes factors).
Here, in full generality, it is actually enough that one of the models is compatible with the statistic
$\feta^n$.  Hence, without loss of generality we assume that the true distribution belongs to model $\model_1$
and we first consider the case where  model $\model_2$ is {\em also compatible} with $\feta^n$, i.e.
$$
\inf\{ |\mu_0 - \mu_2(\theta_2) | ; \theta_2 \in \Theta_2 \} = 0\,.
$$ 
Under assumptions (\ref{A1})--(\ref{A5}),
$$
C_l v_n^{-(d_1-d_2)} \leq \dfrac{m_{1}(\feta^n) }{m_{2}(\feta^n)}  \leq C_u v_n^{-(d_1-d_2)},
$$
where $C_l, C_u = O_{\P^n}(1)$, irrespective of the true model. Thus the asymptotic behaviour of the Bayes
factor depends solely on the difference $d_1-d_2$. For instance, if $d_1>d_2$ (as in the embedded case) and
$G_n$ is in $\model_1$, the Bayes factor goes to 0, instead of infinity. If instead $d_1 = d_2$, the Bayes factor is
bounded from below and from above and is thus useless to separate the two models.  Note that the asymptotic
(non-convergent) behaviour remains the same even when $G_n$ is in neither model, provided   
$$
\inf\{ |\mu_0 - \mu_2(\theta_2) | ; \theta_2 \in \Theta_2 \} =  \inf\{ |\mu_0 - \mu_1(\theta_1) | 
; \theta_1 \in \Theta_1 \} = 0 \;.
$$

On the contrary, assume that the true distribution is in model $\model_1$ and that model $\model_2$ is not
compatible with $\feta^n$,
then the Bayes factor, under assumptions (\ref{A1})--(\ref{A5}), satisfies
$$
  \dfrac{m_1(\feta^n) }{m_2(\feta^n)}  \geq C_\ell \min \left( v_n^{-(d_1- \alpha_2)} , v_n^{-(d_1-  \tau_2)}\right),
$$
and if $\min( \alpha_2, \tau_2) >d_1 $, 
 $$
 \lim_{n \rightarrow +\infty } \dfrac{m_1(\feta^n) }{m_2(\feta^n)} = +\infty,
$$
which leads to choosing the right model asymptotically. 
The above then implies the following consistency result, which is the core derivation of our paper, providing a
characterisation of relevant summary statistics:

\begin{theorem}  \label{thm:BFcon}
If, under assumptions (\ref{A1})--(\ref{A5}),  models $\model_1$ and $\model_2$ are both compatible with $\feta^n$,
and $\tau_i \wedge \alpha_i-2 > d_i$,
then the Bayes factor $B_{12}^{\feta^n}$ has the same asymptotic behaviour as $v_n^{-(d_1 - d_2)}$, irrespective
of the true model. Therefore, it always asymptotically selects the model 
with the smallest effective dimension $d_i$. 

If model $\model_1$ is compatible with $\feta^n$ and model $\model_{2}$ is incompatible with $\feta^n$, 
then
\begin{equation*}
0=  \inf\{ |\mu_0 - \mu_1(\theta_1) | ; \theta_1 \in \Theta_1 \}  
<   \inf \{ |\mu_0 - \mu_{2}(\theta_{2}) | ; \theta_{2} \in \Theta_{2} \}\,,
 \end{equation*}
and if $\min( \alpha_{2}, \tau_{2}) >d_1 $,
then the Bayes factor  $B_{12}^{\feta}$  is consistent.
\end{theorem}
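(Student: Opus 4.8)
The plan is to use the assumed equal prior weights on $\model_1$ and $\model_2$, which reduces the Bayes factor to the ratio of the marginal densities of $\feta^n$, and then to write
$$
B_{12}^{\feta^n} = \frac{m_1(\feta^n)}{m_2(\feta^n)} = \frac{m_1(\feta^n)/g_n(\feta^n)}{m_2(\feta^n)/g_n(\feta^n)}\,,
$$
so that each of the two factors is controlled directly by Lemma \ref{thm:normasympt}. Note that the hypothesis $\tau_i \wedge \alpha_i - 2 > d_i$ is exactly $\tau_i > d_i$ together with $\alpha_i > d_i + 2$, i.e.\ precisely what is needed to invoke the bounds \eqref{th1:1}--\eqref{th1:2}.

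First I would treat the case where both $\model_1$ and $\model_2$ are compatible with $\feta^n$. By \eqref{th1:1}, applied with $i = 1$ and $i = 2$ and read as in the proof of Corollary \ref{corollary:1}, there are fixed positive constants $c,C$ such that, with $\P^n$-probability tending to one, $c\,v_n^{-d_i} \le m_i(\feta^n)/g_n(\feta^n) \le C\,v_n^{-d_i}$ for $i = 1,2$ simultaneously. Taking the ratio gives $(c/C)\,v_n^{-(d_1-d_2)} \le B_{12}^{\feta^n} \le (C/c)\,v_n^{-(d_1-d_2)}$ on the same event, i.e.\ $B_{12}^{\feta^n} \sim v_n^{-(d_1-d_2)}$, and this holds irrespective of which model (if either) $G_n$ belongs to. Since $v_n \to +\infty$, the sign of $d_1 - d_2$ then dictates the limit: $B_{12}^{\feta^n} \to 0$ when $d_1 > d_2$, $B_{12}^{\feta^n} \to +\infty$ when $d_1 < d_2$, and $B_{12}^{\feta^n}$ stays bounded away from $0$ and $\infty$ when $d_1 = d_2$. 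In every case the model with the smaller effective dimension is the one asymptotically favoured, the test being inconclusive precisely when the two dimensions coincide.

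Next I would handle the case where $\model_1$ is compatible with $\feta^n$ and $\model_2$ is not. The incompatibility of $\model_2$ is, by definition, the strict inequality $\inf\{|\mu_0 - \mu_1(\theta_1)| ; \theta_1 \in \Theta_1\} = 0 < \inf\{|\mu_0 - \mu_2(\theta_2)| ; \theta_2 \in \Theta_2\}$ displayed in the statement, so that part needs no argument. For the Bayes factor, \eqref{th1:1} applied to $\model_1$ gives $m_1(\feta^n)/g_n(\feta^n) \gtrsim v_n^{-d_1}$ with probability tending to one, while \eqref{th1:2} applied to $\model_2$ gives $m_2(\feta^n)/g_n(\feta^n) = O_{\P^n}(v_n^{-\tau_2} + v_n^{-\alpha_2})$. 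Hence, given $\epsilon > 0$, on an event of $\P^n$-probability at least $1-\epsilon$ for $n$ large enough,
$$
B_{12}^{\feta^n} = \frac{m_1(\feta^n)/g_n(\feta^n)}{m_2(\feta^n)/g_n(\feta^n)} \gtrsim \frac{v_n^{-d_1}}{v_n^{-\tau_2} + v_n^{-\alpha_2}} \gtrsim v_n^{\min(\alpha_2,\tau_2) - d_1}\,,
$$
and the assumption $\min(\alpha_2,\tau_2) > d_1$ forces the right-hand side, hence $B_{12}^{\feta^n}$, to $+\infty$ in $\P^n$-probability, which is the claimed consistency.

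The argument is thus mostly bookkeeping once Lemma \ref{thm:normasympt} is in hand; the only points that need care are (i) reading the lower bound in \eqref{th1:1} as holding with a fixed positive constant on an event of probability tending to one, so that the quantity may legitimately be placed in a denominator, and (ii) intersecting the high-probability events that control the numerator and the denominator before forming the ratio, which is what makes the ``irrespective of the true model'' assertion rigorous.
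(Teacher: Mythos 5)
Your proposal is correct and follows essentially the same route as the paper, which derives Theorem \ref{thm:BFcon} directly from Lemma \ref{thm:normasympt} in Section \ref{sec:consequence} by bounding $m_1(\feta^n)/g_n(\feta^n)$ and $m_2(\feta^n)/g_n(\feta^n)$ separately via \eqref{th1:1} and \eqref{th1:2} and forming the ratio. Your explicit remarks on intersecting the high-probability events and on the denominator lower bound are exactly the bookkeeping the paper leaves implicit, so there is nothing to add.
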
 

Note that, for ancillary statistics, the condition $\min( \alpha_{2}, \tau_{2})
>d_1 $ is vacuous since $\tau_2=\infty$ and $d_1=0$. The theorem therefore also
applies to compatible ancillary statistics.

An essential practical consequence of Theorem \ref{thm:BFcon} is that the Bayes
factor is merely driven by the means $\mu_i(\theta_i)$ and the relative
position of $\mu_0$ in both sets $\{ \mu_i(\theta_i); \theta_i \in \Theta_i\}
$, $i=1,2$. If $G_n$ is in neither model but $\mu_0$ belongs to $\{
\mu_1(\theta_1) , \theta_1 \in \Theta_1\} $ and not to $ \{ \mu_2(\theta_2) ,
\theta_2 \in \Theta_2\} $, then the Bayes factor will asymptotically select
$\model_1$. Note that the result does not cover the behaviour of the Bayes
factor when neither model is compatible with $\feta^n$, since there is no
simple characterisation in this case.

The following heuristic argument sheds some light on why the above results hold. 

Suppose the summary statistics (appropriately rescaled) are asymptotically
normal under each model. Assume that the Kullback-Leibler divergence
between the distributions of $\sqrt{n} (\feta^n - \mu_i(\theta_i))$ can be
approximated by the Kullback-Leibler divergence between the respective asymptotic
Gaussian distributions  
$$
\sqrt{n}|V_0|^{-1/2}q_\mathcal{G}\{ \sqrt{n} V_0^{-1/2}( \feta^n - \mu_0)\}
$$
and 
$$
\sqrt{n}|V_i(\theta_i)|^{-1/2}q_\mathcal{G}\{\sqrt{n} V_i(\theta_i)^{-1/2}( \feta^n - \mu_i(\theta_i)\}\,,
$$
where $V_0$, $V_1(\theta_1)$, and $V_2(\theta_2)$ denote the asymptotic variances under the various models,
where $|V|$ denotes the determinant of the matrix $V$,
and where $q_\mathcal{G}$ is the pdf of the standard Gaussian distribution. Then
\begin{equation}\label{KL:approx} 
\frac{ 1}{ n} KL\{ g_n(\feta^n), g_{i}(\feta^n|\theta_i) \} \approx \frac{ (\mu_0 - \mu_i(\theta_i) )^t 
V_i(\theta_i)^{-1} (\mu_0 - \mu_i(\theta_i) )}{2} + o(1)\,.
\end{equation}
In that case a usual Laplace argument would imply that 
\begin{equation*}
\frac{ m_i(\feta^n) }{ g_n(\feta^n) } \approx \int \exp\{-\frac{ n(\mu_0 - \mu_i(\theta_i) )^t 
V_i(\theta_i)^{-1} (\mu_0 - \mu_i(\theta_i) )}{2}\}\,\pi_i(\theta_i) d\theta_i .
\end{equation*}
So that the difference between $\mu_0$ and $\mu_i(\theta_i)$ is the key measure to evaluate the distance
between $g_n $ and $g_{i,n}(\cdot|\theta_i)$. The above argument is purely illustrative since requiring 
\eqref{KL:approx} is very strong and not realistic in most cases.

Formally, an ideal statistics $\feta^n$ would be an ancillary
statistics for both models with different expectation under both models. Indeed, in this case,
the sets $\{ \mu_i(\theta_i) , \theta_i \in \Theta_i\}$ $(i=1,2)$ are singletons and they only have to differ
for the Bayes factor to be consistent. For instance, in Example 1, both the empirical variance and
the empirical mad statistic are ancillary. In the first case, the expectation is the {\em same} under both
distribution, which explains why the Bayes factor cannot discriminate between models 
(Fig.~\ref{fig:norla1}). In the second case, the expectations differ, hence a consistent Bayes factor 
as exhibited in Fig.~\ref{fig:norla2}. Concerning the assumptions (\ref{A1})--(\ref{A5}), some simplifications
occur under ancillarity: 
\begin{itemize}
\item[--] assumption (\ref{A3}) must hold for a single distribution and $\mathcal F_{n,i}= S_{n,i}(u) =\Theta_i$; 
\item[--] assumption (\ref{A4}) holds automatically since $d_i=0$;
\item[--] assumption (\ref{A5}) must also hold for the fixed distribution of $\feta^n$ under model $\model_i$ (and
obviously holds when $\model_i$ is the true model).
\end{itemize}
Unfortunately, it is very hard to extract useful ancillary statistics from complex models: while examples of
ancillary statistics abound, for instance rank statistics \citep{sidak:hajek:sen:1999}, they either do not apply
to non-iid settings or have identical means under different models. Example 1 is thus truly a toy example in
that it constitutes  the exception to this remark. When considering the population genetics models
of Section \ref{sec:illustre}, we cannot provide such solutions.

In the special case of $\model_1$ being a submodel of $\model_2$, and if the true distribution belongs to the
smaller model $\model_1$, any summary statistic satisfies  
$$
\mu_0 \in  \{\mu_1(\theta_1); \theta_1 \in \Theta_1\} \subset \{ \mu_2(\theta_2); \theta_2\in \Theta_2\},
$$
so that the Bayes factor is of order
$v_n^{-(d_1 -d_2)}$. If the summary statistic is informative merely on a parameter which is the same under both
models, \textit{i.e.}, if $d_1 = d_2$, then the Bayes factor is not consistent. Else, $d_1 < d_2$ and the Bayes
factor is consistent under $\model_1$. If the true distribution does not belong to $\model_1$, then the same
phenomenon as described above occurs and the Bayes factor is consistent only if $\mu_1 \neq \mu_2 = \mu_0$.  
This case will be illustrated for a quantile distribution in Section \ref{sec:kant}. 

\subsection{About the assumptions (\ref{A1})--(\ref{A5})} \label{subsec:ass}

\noindent 
Assumptions (\ref{A1})--(\ref{A5}) may appear too stringent or too abstract to be of any
practical relevance and we now discuss why they make perfect sense.

Assumption (\ref{A1}) is quite natural.  It is often the case that summary statistics $\feta^n$ are
chosen as empirical versions of quantities of interest (under second order moment conditions) and it is natural
to assume that they concentrate since they are chosen to be both low dimensional and informative on some
aspects of the model (even though the result also applies to ancillary statistics). For instance, when the
summary statistics are empirical means or empirical quantiles, (\ref{A1}) is satisfied with $v_n =
\sqrt{n}$ and the Gaussian distribution being the limiting $Q$ (a most common occurrence). However, if
$\feta^n$ is a distance (e.g., of the type induced by chi-square like statistics) then $Q$ will be the
chi-square distribution. We also note that (\ref{A1}) holds for some ancillary statistics, like those of Example
1.

Assumption (\ref{A3}) requires that under each model $\feta^n$
concentrates around the model asymptotic mean values $\mu_i(\theta_i)$ at rate $v_n$,
even though it is not necessary to have convergence in distribution. More
precisely, (\ref{A3}) controls the moderate deviations of the estimator
$\feta^n$ from the asymptotic mean $\mu(\theta)$ under each model.  For instance, when
$\feta^n$ is an empirical mean, i.e., $\feta^n = n^{-1} \sum_{i=1}^n h(y_i)$
for a given function $h$, Markov inequality implies that for every $\theta_i \in  \Theta_i$, 
{\small \begin{eqnarray}\label{emp:mean}
 G_{i,n} \left[ \sqrt{n} |\feta^n - \mu_i(\theta_i) | > u \Big| \theta_i \right] \leq \frac{ \mathbb{E}\Big[ 
\left| \sum_{i=1}^n \{h(y_i)  - \mu_i(\theta_i)\} \Big| \theta_i \right|^p\Big] }{ u^p \,n^{p/2} } \lesssim u^{-p} ,
\end{eqnarray}}
for large values of $p$ (typically, larger than $d_i+2$)
and under very weak assumptions (much weaker than being in an i.i.d.~setting). 

\noindent Assumption (\ref{A4}) describes the behaviour of the prior distribution of the mean of $\feta^n$ near the
true asymptotic value $\mu_0$. This assumption needs  only hold on a compatible model and
it is often found in the Bayesian asymptotic literature, see for instance condition (2.5) 
of Theorem 1 in \citet{gvdv:06}. Usually referred to as
{\em the prior mass condition}, it corresponds to the fact that if the prior vanishes in regions where the
likelihood is not too small (i.e., near $\mu_0$ in our case) then the marginal becomes very small. The
exponents $d_i$ can be viewed as effective dimensions of the parameter under the posterior distributions, as discussed after Corollary \ref{corollary:1}. 
If the maps $\theta_i \mapsto \mu_i(\theta_i) $ are 
locally invertible near $\mu_0$, under
the usual continuity conditions on the maps $\theta_i \mapsto |\mu_0 - \mu_i(\theta_i)|$,  for any $u >0$,
there exists a finite collection of points $\theta^*_{ij} \in \Theta_i$ such that the sets $S_{n,i}(u) $ can
be bounded both from above and from below by sets of the form 
\begin{equation} \label{eqn:covballs}
\bigcup_{j = 1}^J \{\theta_j: |\theta_j - \theta_{ij}^*| \lesssim u v_n^{-1} \}, \quad  J \in \mathbb{N} \;. 
\end{equation}
Thus if the prior density $\pi_i$ is bounded from above and below near the points $\theta_{ij}^*$,  we
immediately deduce that $\pi_i\{S_{n,i}(u) \}\sim u^dv_n^{-d}$ and $d_i = d$, verifying (\ref{A4}).  In most cases
we will have $d_i \leq d$, since assuming that $d_i>d$ would imply that the prior density of $\mu(\theta) $
explodes at $\mu_0$.

\noindent Assumption (\ref{A5}) states that, if there are $\theta_i$'s such that
$\mu_i(\theta_i) = \mu_0$, then uniformly in $\theta$ close to one of those
$\theta_i$'s, $g_i(t|\theta_i)$ is bounded from below by $\delta g_n(t)$ on a set
having large probability in terms of $G_n$. There are various instances under
which this assumption is satisfied. First, if $\model_i$ is the true model and $\feta^n$ is ancillary under this model, it
automatically holds since for all $\theta_i \in S_{n,i}(u)$ $g_{i}(\cdot |\theta_i) = g_n(\cdot)$. Secondly, if
$v_n(\feta^n - \mu_i)$ converges in distribution to $Q$ and if the densities
are close, then (\ref{A5}) is satisfied. This requires in particular that
$\feta^n$ has the same support under $G_n$ and $G_i$, but not necessarily
that $G_i$ or $G_n$ are continuous distributions.  Assumption (\ref{A5}) may
become difficult to check when the sets $S_{n,i}(u)$ are not compact, which is
typically the case when the sets $\{\theta_i; \mu_i(\theta_i)=\mu_0\}$ are not
compact.  The important point here is that, in such cases, the posterior
distribution $\pi_i(\cdot|\feta^n)$ is not informative on the whole parameter
$\theta_i$ (at least no further than the prior) but instead informative on a
fraction of it, summarised by $\mu_i(\theta_i)$. Re-parametrising  $\theta_i $ into $(\mu_i(\theta_i), \psi_i)$ where $\psi_i$ represents the part of $\theta_i$ which is not informed by the asymptotic distribution of $\feta^n$,  $\feta^n$ is asymptotically ancillary for $\psi_i$. In such a case, (\ref{A5}) 
will still hold in situations where the prior  distribution does not assign too much mass
near the tails, so that the sieves $\mathcal F_{n,i}$ can be chosen not too large
 to ensure that the distributions $G_{i,n}$ of $\feta^n$ hardly depend on $\psi_i$.

\section{Illustrations}\label{sec:illustre}

\subsection{Gaussian versus Laplace distributions} \label{subsec:GaLap}

Recall that in the setting of Example \ref{laplace-vs-gaussian}, we denote by
$\model_1$ the Gaussian model and by $\model_2$ the Laplace model. In each
model, the prior on the mean $\theta_i$ is a centred Gaussian distribution
with variance $4$ and in each case the data are simulated under $\theta_i = 0$.
For illustrating our main result on consistency, we consider the summary
statistics made of the empirical fourth moment, $\feta^n = n^{-1} \sum_{i=1}^n
y_i^4$, such that $\mu_1(\theta) = \theta^4 + 3 + 6 \theta^2$ and
$\mu_2(\theta) = \theta^4 + 6 + 6 \theta^2$.

We now endeavour to check that assumptions (\ref{A1})--(\ref{A5}) hold for that statistic. Given that this is an
empirical moment, (\ref{A1}) is trivially satisfied as a consequence of the Central Limit theorem, with
$v_n=\sqrt{n}$. 

For assumption (\ref{A3}), $\mu_i(\theta_i)$ is already defined above. 
For both models, we set $\mathcal F_{n,1}  = \mathcal F_{n,2} = \{ |\theta| 
\leq 2 \sqrt{\log n}\}=\mathcal F_n$ for (\ref{A3}) to hold, so that
$$
\pi_1(\mathcal F_{n,1}^c) =\pi_2(\mathcal F_{n,2}^c) = o(n^{-1}) 
$$
under a Gaussian prior on $\theta$ under both models, which implies $\tau_i=2$. The second part of 
(\ref{A3}) is verified using Markov inequality.  Indeed, 
\begin{equation*}
\begin{split}
G_{i,n} \left[ \left. \left| n^{-1} \sum_{j=1}^n (y_j^4 - \mu_i(\theta))\right| > x
\right| \theta\right]
  & \leq \frac{\mathbb{E}_i[ (Y^4 - \mu_i(\theta))^4|\theta] }{n^2 x^4}\,.
\end{split}
\end{equation*}
which implies $\alpha_i=4$.

Addressing (\ref{A4}), in model $\model_1$ if the mean is equal to zero 
then $\mu_0 = 3$ and, in model $\model_2$ if $\theta_2=0$, 
then $\mu_0 =6$.  Thus $S_{n,1}(u)  $ and $S_{n,2}(u)$ can be bounded 
from above and below by balls of the form
$$
|\theta| \leq C\,u^{1/2} n^{-1/4}\,,
$$  
so that $d_1=d_2=1/2$ in those cases. Note that, if the mean is different from zero, $\mu_0 >3$ 
in model $\model_1$ and $\mu_0 > 6$ in model $\model_2$. Then $S_{n,1}(u)$ and
$S_{n,2}(u)$ can be bounded from above and below by balls in the form 
$$
|\theta^2  - \theta_*^2|\leq C\,u\,n^{-1/2}, \quad |\theta_*|>0
$$ 
so that $d_1=d_2=1$ in those cases. \\
Addressing (\ref{A5}), since both distributions satisfy Cramer condition, the empirical fourth
moment allows for an Edgeworth expansion under both models, which can be made uniform in sets in
the form $\{ |\theta| \leq C n^{-1/4}\}$, see \citet[Theorem 19.1]{battacharya:rao}.
Hence, (\ref{A5}) is satisfied.

In conclusion, if the true distribution belongs to model $\model_2$ and the mean is equal to zero, $\mu_0
\in \{ \mu_i(\theta); \theta \in \R\} $ for both $i=1,2$ and we have  $d_1= 1$ and $d_2 = 1/2$. On the other
hand, if the true distribution belongs to model $\model_1$ then $d_1 = 1/2$ and 
$$
\inf\{ |\mu_0 - \mu_2(\theta_2) | ; \theta_2\in \R\}>0\,.
$$
Following from Theorem \ref{thm:BFcon}, the Bayes factor is then consistent but at the rate
$n^{-1/4}$ under model $\model_2$. This is to some extent an accidental result, merely due to the fact that, in
that very special case when the mean is equal to zero, $d_1>d_2$. Fig. \ref{fig:4th} presented in Section
\ref{subsec:insu} illustrates the above discussion.  Finally, note that, if the mean is different from zero,
then a similar argument leads to the lack of consistency of the Bayes factor, since then
$d_1=d_2=1$ and $\mu_0 \in \{\mu_i(\theta); \theta \in \R\}$, for both $i=1,2$.

\subsection{Quantile distributions}\label{sec:kant}

We now consider the example of a four-parameter quantile distribution, defined through its quantile function
$$
Q(p;A,B,g,k) = A + B \left( 1 + 0.8
\frac{1-\exp\{-gz(p)\}}{1+\exp\{-gz(p)\}}\right) \left (1+z(p)^2 \right )^k z(p)  
$$
where $z(p)$ is the $p$th standard normal quantile and the parameters $A, B, g$ and $k$ represent location,
scale, skewness and kurtosis, respectively \citep{haynes:macgillivray:mengersen:1997}. While the quantile function
is well-defined, and the distribution easy to simulate, there is no closed-form expression for the corresponding
density function, which makes the implementation of an MCMC algorithm quite delicate.
\cite{allingham:king:mengersen:2009} introduce a ABC procedure that uses the order statistics as summary statistics.
We consider here a model choice perspective.
 
In this experiment, we set $A=0$ and $B=1$. We then oppose two models:
\begin{itemize}
\item[--] model $\model_1$, in which $g=0$, with a single unknown parameter $\theta_1=k$ and a prior
$\theta_1\sim\mathcal{U}[-1/2,5]$. In the simulation process, when $\model_1$ is true, we choose $\theta_1=2$.
\item[--] model $\model_2$, with two unknown parameters $\theta_2=(g,k)$ and a prior
$\theta_2\sim\mathcal{U}[0,4]\otimes\mathcal{U}[-1/2,5]$.
In the simulation process, when $\model_2$ is true, we choose $\theta_{2,1}=1$ and $\theta_{2,2}=2$.
\end{itemize}
This obviously is a case of embedded models, since $\model_1$ is a sub-model of $\model_2$. As in the previous
experiments, we use an ABC procedure relying on $10^4$ proposals from the prior and a tolerance set at the
$1\%$ quantile of the $L_1$ distances between some empirical quantiles. In the comparison below, we first use
the empirical quantile of order $10\%$ as sole summary statistic.  Then, we consider the empirical quantiles of
order $10\%$ and $90\%$, and, at last, the empirical quantiles of order $10\%$, $40\%$, $60\%$ and $90\%$.  The
results are summarised in Fig.~\ref{fig:quantiles}.  They show complete agreement with Theorem 1.

When the summary statistic $\feta^n$ is restricted to the empirical quantile of order
$10\%$, the Bayes factor is not consistent. Indeed, in such a case,
we have
$$
\mu_1(\theta_1)= \left (1+z(0.1)^2 \right )^{\theta_1} z(0.1)\,,
$$
and
$$
\mu_2(\theta_2)=\left( 1 + 0.8\frac{1-\exp(-\theta_{2,1}z(0.1))}{1+\exp(-\theta_{2,1}z(0.1))}\right) 
\left (1+z(0.1)^2 \right )^{\theta_{2,2}} z(0.1)\,.
$$
When $\model_1$ is true with $k=2$, then $\mu_0\approx -8.95$ and
$\inf\{|\mu_0-\mu_1(\theta_1)|;\theta_1\in\Theta_1\}=\inf\{|\mu_0-\mu_2(\theta_2)|;\theta_2\in\Theta_2\}=0$.
Similarly, when $\model_2$ is true with $g=1$ and $k=2$, then $\mu_0\approx -4.90$ and
$$
\inf\{|\mu_0-\mu_1(\theta_1)|;\theta_1\in\Theta_1\}=\inf\{|\mu_0-\mu_2(\theta_2)|;\theta_2\in\Theta_2\}=0\,.
$$
Therefore, the Bayes factor has the same asymptotic behaviour as $n^{-(d_1-d_2)/2}$,
irrespective of the true model. We can prove that $d_1=d_2=1$ in this case, therefore 
that the Bayes factor is not consistent.

When the summary statistics $\feta^n$ is the vector made of the empirical quantiles of order
$10\%$ and $90\%$, the Bayes factor is consistent. Indeed, in such a case,
we have 
$$
\mu_1(\theta_1)= \left[\left (1+z(0.1)^2 \right )^{\theta_1} z(0.1),\left (1+z(0.9)^2 \right )^{\theta_1} z(0.9)\right]\,,
$$
and
$$
\mu_2(\theta_2)=\left[\left( 1 + 0.8
\frac{1-\exp(-\theta_{2,1}z(0.1))}{1+\exp(-\theta_{2,1}z(0.1))}\right) 
\left (1+z(0.1)^2 \right )^{\theta_{2,2}} z(0.1),\right.
$$
$$
\left.\left( 1 + 0.8 \frac{1-\exp(-\theta_{2,1}z(0.9))}{1+\exp(-\theta_{2,1}z(0.9))}\right) 
\left (1+z(0.9)^2 \right )^{\theta_{2,2}} z(0.9)\right]\,.
$$
When $\model_1$ is true with $k=2$, then $\mu_0\approx [-8.95,8.95]$ and
$$
\inf\{|\mu_0-\mu_1(\theta_1)|;\theta_1\in\Theta_1\}=\inf\{|\mu_0-\mu_2(\theta_2)|;\theta_2\in\Theta_2\}=0\,.
$$
We can prove that $d_1<d_2$ and hence that the Bayes factor is consistent.
Moreover, when $\model_2$ is true with $g=1$ and $k=2$, then $\mu_0\approx [-4.90,12.99]$, and
$$
\inf\{|\mu_0-\mu_2(\theta_2)|;\theta_2\in\Theta_2\}=0
\quad\text{ but }\quad
\inf\{|\mu_0-\mu_2(\theta_1)|;\theta_1\in\Theta_1\}>0\,.
$$ 
We can prove that $\min(\alpha_1,\tau_1)>d_2$ and therefore that the Bayes factor is again consistent.

Finally, when the summary statistics $\feta^n$ is the larger vector made of the empirical quantiles of order
$10\%$, $40\%$, $60\%$ and $90\%$, the Bayes factor is obviously consistent.  However, the results obtained in
Fig.~\ref{fig:quantiles} are very similar to the ones obtained with only two empirical quantiles.
 
\begin{figure}[ht]
\centerline{\includegraphics[height=9truecm]{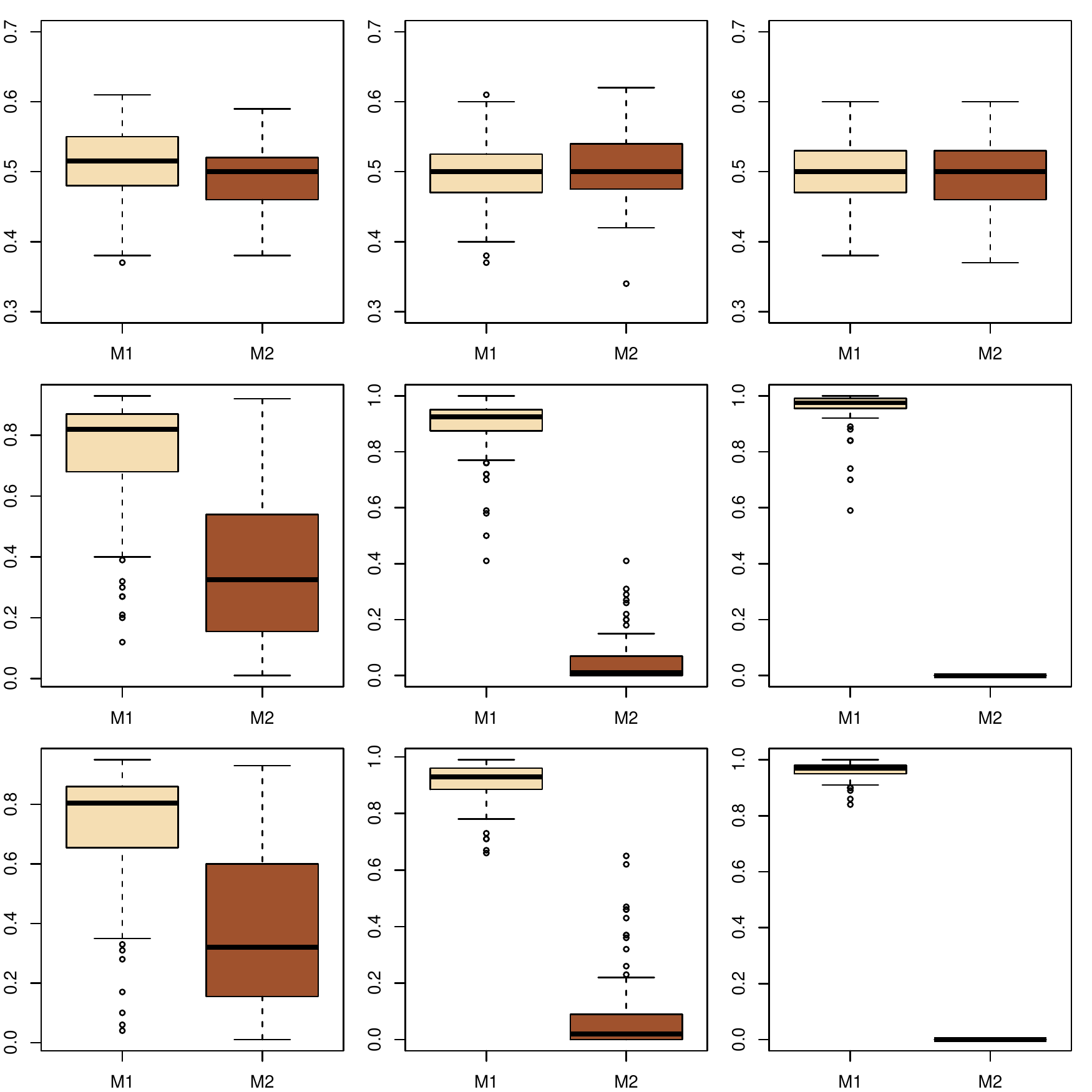}}
\caption{Comparison of the distributions of the posterior probabilities that
the quantile distribution data is from model $\model_1$ when the data is made
of 100 observations {\em (left column)}, 1000 observations {\em (central
column)} and 10,000 observations {\em (right column)} either from $\model_1$
{\em (M1)} or $\model_2$ {\em (M2)} when the summary statistics in the ABC
algorithm are made of the empirical quantile at level $10\%$ {\em (first row)},
the empirical quantiles at levels $10\%$ and $90\%$ {\em (second row)}, and the
empirical quantiles at levels $10\%$, $40\%$, $60\%$ and $90\%$ {\em (third
row)}, respectively. The boxplots rely on $100$ replicas and the ABC algorithms
are based on $10^4$ proposals ($5,000$ for each model) from the prior, with the
tolerance being chosen as the $1\%$ quantile on the distances.
}
\label{fig:quantiles}
\end{figure}

\subsection{Population genetics experiment}\label{sec:popgenX}

We now examine a Monte Carlo experiment that more directly relates to the genesis of ABC, namely population
genetics.  As in \cite{robert:cornuet:marin:pillai:2011}, we consider two populations (1 and 2) having diverged
at a fixed time $t'$ in the past and a third population (3) having diverged from one of those two populations
(models 1 and 2, respectively). Times are set to $t'=60$ generations for the first divergence and $t=30$
generations for the second one. The effective population size is assumed to be identical for all three
populations and equal to $N_e=60$. Recall that the effective size of a population is defined as the
size of an ideal (Wright-Fisher) population that would show the same behaviour as the population of interest,
in terms of loss of genetic variation due to random drift. \\
We assume we observed 50 diploid individuals per population genotyped at 5,
50 or 100 independent microsatellite loci, this number acting as a proxy to the sample size. These loci are
assumed to evolve according to the stepwise mutation model: when a mutation occurs, the number of repetitions
of the mutated gene increases or decreases by one unit with equal probability. For each configuration (defined
in terms of loci numbers), we generate 100 observations for which the mutation rate $\theta$ is common to all loci
and set to $0.005$.  In these experiments, both scenarios have a single parameter, the mutation rate $\theta$.  We
chose a uniform prior distribution $\mathcal{U}[0.0001,0.01]$ on this parameter $\theta$.  

For the ABC analysis, we use three
summary statistics associated to the $(\delta\mu)^2$ distances \citep{Goldstein:etal:1995,Cooper:etal:1999}.
Let $x_{l,i,j}$ be the repeated number of allele in locus $l=1,\ldots,n$ ($n=5,50,100$)
for individual $i=1,\dots,100$ (corresponding to $50$ diploid individuals) within population $j=1,2,3$.
The $(\delta\mu)^2$ distance between population $j_1$ and $j_2$, denoted by $(\delta\mu)^2_{j_1,j_2}$, is:
$$
(\delta\mu)^2_{j_1,j_2}=\frac{1}{n}\sum_{l=1}^n \left(\frac{1}{100}\sum_{i_1=1}^{100}x_{l,i_1,j_1}-
\frac{1}{100}\sum_{i_2=1}^{100}x_{l,i_2,j_2}\right)^2\,.
$$

Let us consider two copies of the locus $l$ with allele sizes $x_{l,i,j_1}$ and $x_{l,i',j_2}$, and assume that
the most recent time in the past for which they have a common ancestor, defined as the coalescence time
$\tau_{j_1,j_2}$, is known.  The two copies are then separated by a branch of gene genealogy of total length
$2\tau_{j_1,j_2}$.  As explained in \cite{Slatkin:1995}, according to the coalescent process, during that time
the number of mutations is a random variable distributed from a Poisson distribution with parameter
$2\mu\tau_{j_1,j_2}$.  Therefore, if the stepwise mutation model is adopted, we get (under models 1 and 2)
$$
\E^i\left\{\left(x_{l,i,j_1}-x_{l,i',j_2}\right)^2|\tau_{j_1,j_2}\right\}=2\theta\tau_{j_1,j_2}\,.
$$
In addition, if $j_1=1$ and $j_2=2$, we have (under models 1 and 2)
$$
\E\left\{\tau_{1,2}\right\}=\left(2N_e+t'\right)\,,
$$
and
$$
\E\left\{\left(x_{l,i,1}-x_{l,i',2}\right)^2\right\}=4N_e\theta+2\theta t'\,.
$$
Moreover,
\begin{align*}
\left(x_{l,i,1}-x_{l,i',2}\right)^2&=
\left(x_{l,i,1}-\frac{1}{100}\sum_{i_1=1}^{100}x_{l,i_1,1}+\frac{1}{100}\sum_{i_1=1}^{100}x_{l,i_1,1}\right.\\
&\left.+\frac{1}{100}\sum_{i_2=1}^{100}x_{l,i_2,2}-\frac{1}{100}\sum_{i_2=1}^{100}x_{l,i_2,2}-x_{l,i',2}\right)^2
\end{align*}
And
\begin{align*}
&\E\left\{\left(x_{l,i,1}-\frac{1}{100}\sum_{i_1=1}^{100}x_{l,i_1,1}\right)^2\right\}+
\E\left\{\left(x_{l,i',2}-\frac{1}{100}\sum_{i_2=1}^{100}x_{l,i_2,2}\right)^2\right\}+\\
&\quad
\E\left\{\left(\frac{1}{100}\sum_{i_1=1}^{100}x_{l,i_1,1}-\frac{1}{100}\sum_{i_2=1}^{100}x_{l,i_2,2}\right)^2\right\}=
4N_e\theta+2\theta t'\,.
\end{align*}
The coalescent process associated to the stepwise mutation model gives
$$
\E\left\{\left(x_{l,i,1}-\frac{1}{100}\sum_{i_1=1}^{100}x_{l,i_1,1}\right)^2\right\}=
\E\left\{\left(x_{l,i',2}-\frac{1}{100}\sum_{i_2=1}^{100}x_{l,i_2,2}\right)^2\right\}=2N_e\theta\,,
$$
and then
$$
\E\left\{(\delta\mu)^2_{1,2}\right\}=2\theta t'\,.
$$
We can apply the same type of reasoning to the other $(\delta\mu)^2$ distances, and
if $\mu_i$ denotes the mutation rate under model $i$, we get the results given
in Table \ref{tab:tab1}.

\begin{table}
\caption{\label{tab:deltamus} Theoretical expectations of
the $(\delta\mu)^2_{a,b}$ statistics under both models \label{tab:tab1}}
\begin{tabular}{l c c}
                                      & \mbox{Model 1} & \mbox{Model 2} \\
\hline $\E\left\{(\delta\mu)^2_{1,2}\right\}$ & $2\theta_1 t'$& $2\theta_2 t'$ \\
	$\E\left\{(\delta\mu)^2_{1,3}\right\}$ & $2\theta_1 t$ & $2\theta_2 t'$ \\
	$\E\left\{(\delta\mu)^2_{2,3}\right\}$ & $2\theta_1 t'$& $2\theta_2 t$ \\
\end{tabular}
\end{table}
 
Given the complexity of this genetic model, it provides a realistic example
of relevant statistics satisfying  the assumptions (\ref{A1})--(\ref{A5}). Let us consider
the associated statistics $(\delta\mu)^2_{1,2}$. This is an empirical mean of variables 
$$
Y_l=\left(\frac{1}{100} \sum_{i_1=1}^{100}x_{l,i_1,1} - 
\frac{1}{100}\sum_{i_2=1}^{100}x_{l,i_2,2}\right)^2\,,
$$
which are independent and identically distributed. Moreover, since for each couple $(i_1, i_2)$, 
$\left(x_{l,i_1,1}- x_{l,i_2,2}\right)$ is bounded by a Poisson random variable, then, under each model,
$Y_l$ has moments of all orders and $(100)^2Y_l \in \Z$.
Thus, using Theorem 2.2.1 of \cite{battacharya:rao}, we obtain that for all $s \geq 2$ and all $\theta_i$
\begin{equation}\label{Edg:Mi}
( 1 + |t|^{s}) \left| g_i(t|\theta_i)   - p_{n,s,\theta_i}( n^{1/2}( t - \mu_i(\theta_i))) \right| =  o(n^{(s-1)/2})
\end{equation}
where 
\begin{enumerate}
\item[(i)] the order
$o(n^{(s-1)/2})$ is uniform over $t\in \Z/(100^2n)$ and over compact subsets of $\Theta_i\subset \R^+$,
\item[(ii)] $p_{n,s,\theta_i}( n^{1/2}( t - \mu_i(\theta_i)))$ is the Edgeworth expansion of the 
density of $(\delta\mu)^2_{1,2}$:
$$
p_{n,s,\theta_i}( n^{1/2}\{ t - \mu_i(\theta_i) \} ) = n^{-1/2}\phi( n^{1/2}( t - \mu_i(\theta_i))/\sigma_i(\theta_i))  
+ o(n^{-1/2})
$$
for some $\sigma_i(\theta_i)>0$.

\end{enumerate}
 
If under the true distribution $Y_l$ also has at least $s\geq 2$ moments, then 
\begin{equation}\label{Edg:G0}
g_n(t) = n^{-1/2}\phi( n^{1/2}( t - \mu_0)/\sigma_0)  + o(n^{-1/2}), \quad \forall t\in \Z/(100^2n)
\end{equation}
for some $\mu_0\in \R$ and 
	$\sqrt{n} ( (\delta\mu)^2_{1,2} - \mu_0) \rightsquigarrow \mathcal N( 0, \sigma_0^2 ) $
thus (\ref{A1}) is satisfied with $v_n = \sqrt{n}$. Introducing 
	$E_n = \{ t \in \Z/( 100^2 n ) ; |t - \mu_0| \leq n^{-1/2} \gamma_\epsilon \}$, 
$G_n(E_n^c) < \epsilon$ from \eqref{Edg:G0} and \eqref{Edg:Mi} implies that, 
if $|\mu_i(\theta_i) - \mu_0| \leq \delta_0 n^{-1/2}$,
$g_i(t|\theta_i) \geq \delta g_n(t)$ for all $t \in E_n$ by choosing $\delta_0 $ 
small enough. Hence (\ref{A5}) is verified.  Moreover, if model $\model_i$ is compatible, 
	$\mu_1(\theta_1) =  2 \theta_1 t'$ and  $\mu_2(\theta_2) =  2 \theta_2 t'$, 
for $\mu_0 \in (0.0002t',0.02t')$, 
$$
\pi_i\left(S_{n,i}(u)\right) \sim u\,n^{-1/2}
$$
and (\ref{A4}) is satisfied with $d_i=1$. It is straightforward to verify (\ref{A3}).
Indeed, if model $\model_i$ is not compatible, choosing 
	$\epsilon_i > \inf\{ |\mu_0 - \mu_i(\theta_i)|; \theta_i \in ( 0.0001, 0.01) \}$,
for all $\theta_i \in ( 0.0001, 0.01)$, using \eqref{Edg:Mi} we get
$$
G_{i,n}[ |(\delta\mu)^2_{1,2} - \mu_i(\theta_i)  | >\epsilon_i |\theta_i] = o(n^{-(s-1)/2})
$$
uniformly in $\theta_i$ for all $s\geq 2$. If model $\model_i$ is compatible, using Markov inequality,
\begin{equation}
G_{i,n}[ |(\delta\mu)^2_{1,2} - \mu_i(\theta_i) | > x |\theta_i] \leq  
\frac{  \E_{\theta_i}\left[ |(\delta\mu)^2_{1,2} - \mu_i(\theta_i) |^4\right] }{  x^4 }
\end{equation}
and (\ref{A3}) is satisfied with $\alpha_i=4$. We can use the same arguments 
to show that assumptions (\ref{A1})--(\ref{A5}) holds also if
$\feta^n(\by)=((\delta\mu)^2_{1,3} , (\delta\mu)^2_{2,3})$. 

Table \ref{tab:deltamus} indicates that whatever model the data originates from (whether 
$\model_1$ or $\model_2$), the Bayes factor based only on the distance $(\delta\mu)^2_{1,2}$ as the
summary statistic does not converge.  Indeed, if $\theta_1=\theta_2$,
we get the same expectation on the first line of Table \ref{tab:deltamus}. The same occurs when only
$(\delta\mu)^2_{1,3}$ (resp. $(\delta\mu)^2_{2,3}$) is used. Indeed, in that case, if $\theta_1=2\theta_2$ (resp.
$2\theta_1=\theta_2$) we get the same expectation on the second (resp., the third) row of Table \ref{tab:deltamus}.
Now, if either two or three of the distances are used, the Bayes factors do converge. Indeed, in these settings,
no value of $\theta_1$ and $\theta_2$ can produce equal expectations.

Fig.~\ref{fig:popgen} shows how the empirical results confirm this theoretical analysis. Even the medium 
case of 50 loci indicates whether the use of the corresponding summary statistic(s) is valid or not.
Under both models, the ABC computations have been performed using the DIY-ABC software \citep{Cornuet:etal:2008}.

\begin{figure}[ht]
\centerline{\includegraphics[width=9truecm]{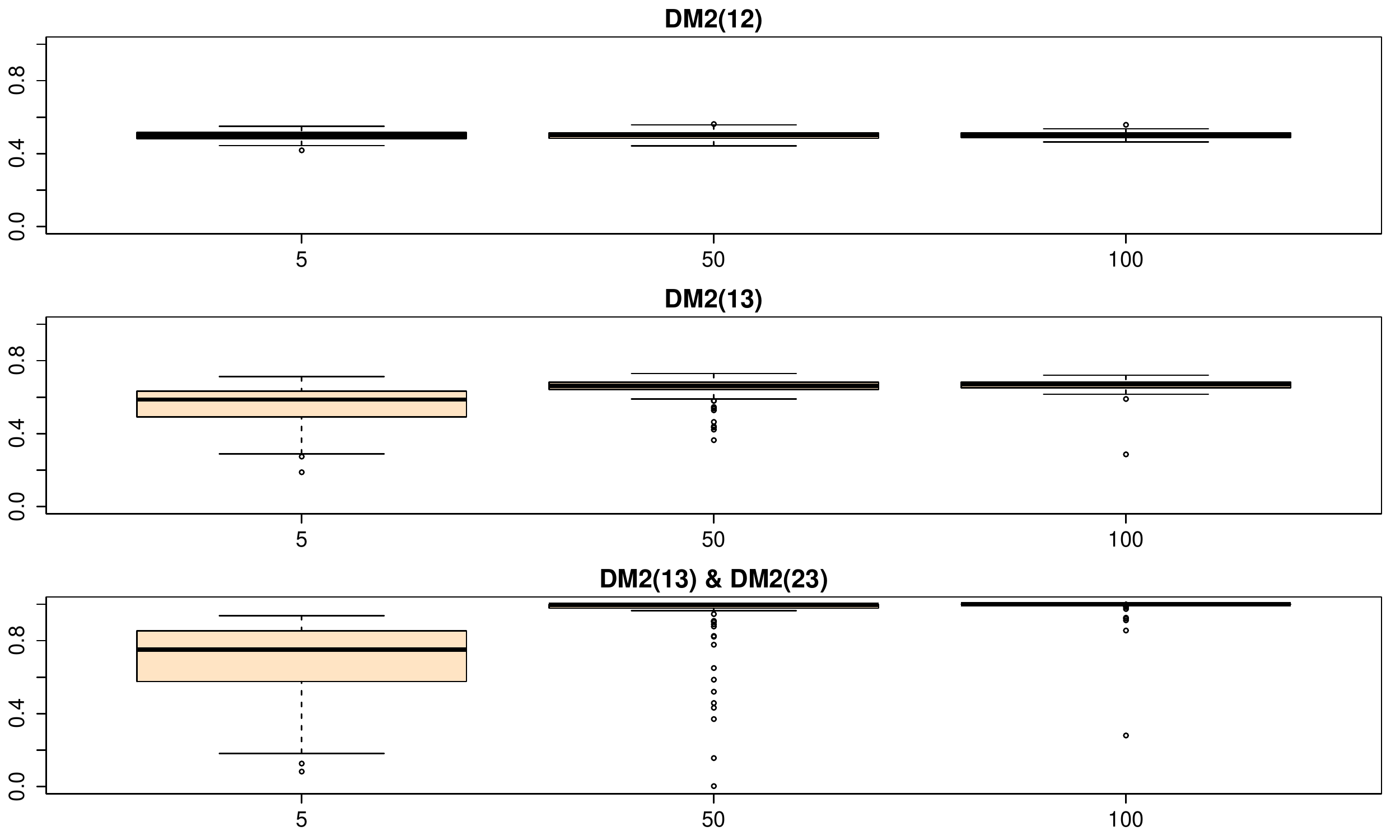}}
\caption{Comparison of the distributions of the posterior probabilities that the data is from model 1
for 5, 50 and 100 loci {\em (left, centre, right, resp.)} when the summary statistic in the ABC algorithm is 
made of different $(\delta\mu)^2$ distances. The ABC algorithm uses $2\times10^5$ proposals ($10^5$
for each model) from the prior and selects the tolerance $\epsilon$ as the $0.5\%$ distance quantile.
}
\label{fig:popgen}
\end{figure}

\section{Checking for relevant statistics}
\label{sec:checkstat}

\subsection{A practical procedure}

While Theorem 1 operates in an asymptotic and theoretical framework, it is
nonetheless possible to find a methodological consequence from this
characterisation of consistent summary statistics for testing. This result
states that the summary statistic $\feta^n$ is not consistent (and thus
unacceptable) for testing between models when both models are compatible with
$\feta^n$, in other words when
$$
\inf\{ |\mu_1(\theta_1)- \mu_0| ; \theta_1 \in \Theta_1 \} = \inf\{ |\mu_2(\theta_2)- \mu_0| ; 
\theta_2 \in \Theta_2 \}=0\,.
$$
Based on this our asymptotic result, we propose to run a practical check of the
relevance (or non-relevance) of $\feta^n$.  The null hypothesis
of this test is expressed as both models are
compatible with the statistic $\feta^n$. The testing procedure then provides estimates
of the mean of $\feta^n$ under each model and checks whether or not those means are equal. For the
sake of clarity, we assume without loss of generality that $\model_1$ is the
true model (recall that it is enough to have this model compatible with the statistic $\feta^n$), so
checking the relevance of $\feta^n$ means testing for
 $$H_0 : \inf\{ |\mu_2(\theta_2)- \mu_0| ; \theta_2 \in \Theta_2 \}=0$$
against 
 $$ H_1 :\inf\{ |\mu_2(\theta_2)- \mu_0| ; \theta_2 \in \Theta_2 \}>0 .$$

Corollary \ref{corollary:1} implies that, when model $\model_i$ is compatible
with $\feta^n$, the predictive value of the summary statistic,
$\E^\pi\left[\feta^n(\by^{\rm{new}})|\feta^n(\by) \right]$, is approximately
equal to $\mu_0$ ($\by$ denotes the observed summary statistic):
\begin{equation*}
\begin{split}
  \E^\pi\left[\feta^n(\by^{\rm{new}})|\feta^n, \model_i \right] &=\int  
	t g_{i,n}(t | \theta_i) dt d\pi_i( \theta_i| \feta^n(\by) )  \\
  &= \int_{\Theta_i} \mu_i(\theta_i) d\pi_i( \theta_i| \feta^n(\by) ) \\
  &= \mu_0 + \int_{\Theta_i} ( \mu_i(\theta_i) - \mu_0) d\pi_i( \theta_i| \feta^n(\by))\,.
\end{split}
\end{equation*}
When $|\mu_i(\theta_i)|$  is bounded on $\Theta_i$ (for instance when $\Theta_i$ is compact 
and $\mu_i(\cdot)$ is continuous)
  $$\int_{\Theta_i} ( \mu_i(\theta_i) - \mu_0) d\pi_i( \theta_i| \feta^n(\by) ) = o_p(1). $$
Thus, under the null (non-relevance of $\feta^n$), we have 
$$  
\E^\pi\left[\feta^n(\by^{\rm{new}})|\feta^n(\by),  \model_1 \right] =  
\E^\pi\left[\feta^n(\by^{\rm{new}})|\feta^n(\by),  \model_2 \right] + o_p(1) = \mu_0 + o_p(1)
$$
and the proximity of both predictive values indicates that the statistic $\feta^n$ is not discriminant. 

To quantify what this notion of proximity means we advocate using the following 
practical procedure. Under each model 
$\model_i$, $i=1,2$, run an ABC sample producing a sample $\theta_{i,l}, l=1, \cdots , L$ from 
the approximate posterior distribution of $\theta_i$ given $\feta^n(\by)$. Note that $L$ can be 
chosen to be arbitrarily large. For each value $\theta_{i,l}$, generate $\by_{i,l}\sim F_{i,n}(\cdot|\psi_{i,l})$,
derive $\feta^n(\by_{i,l})$ and compute 
$$
\hat{\mu_i} = \frac{1}{ L } \sum_{l=1}^L\feta^n(\by_{i,l}) , \quad i = 1,2\,.
$$
Conditionally on $\feta^n(\by)$, we have 
$$
\sqrt{L}( \hat{\mu_i} - \E^\pi\left[ \mu_i(\theta_i)| \feta^n(\by) \right]) \rightsquigarrow \mathcal N( 0, V_i),
$$
for some $V_i$, as $L$ goes to infinity. Therefore, we propose to test for a common mean
$$
H_0 : \hat{\mu_1} \sim \mathcal N(\mu_0, V_1)\,, \hat{\mu_2} \sim \mathcal N(\mu_0, V_2)
$$
against the alternative of different means
$$
H_1 : \hat{\mu_i} \sim \mathcal N(\mu_i, V_i), \quad \mbox{ with } \mu_1 \neq \mu_2\,.
$$
This test is implemented using the fact that asymptotically the decision statistic
$$
(\hat\mu_1-\hat\mu_2)^\text{T}(V_1+V_2)^{-1}(\hat\mu_1-\hat\mu_2)
$$
converges to a chi-squared distribution even in the case the covariance matrices $V_1$ and
$V2$ are estimated by convergent estimators, e.g.~empirical covariances.
If the null hypothesis $H_0$ cannot be rejected,
we conclude that the statistic $\feta^n(\by)$ is not adequate for model choice.

\subsection{Gaussian versus Laplace distributions}

In the case of the normal versus Laplace toy problem, we ran a hundred evaluations based on three and four
statistics, i.e.~the empirical mean, median and variance, without and with the empirical mad. 
The two choices of the summary statistic vector led to two different ABC approximations of the posterior
distribution. Under each model, the ABC procedure is based on a fixed reference table
of $5\times 10^4$ proposals from the prior $\theta\sim \mathcal{N}(0,4)$
and the respective model, and it selects the tolerance $\epsilon$ as the $1\%$ quantile of
the deduced simulation distances. Then, under each model, we get a $500$-sample as an approximation
of the posterior distributions. For each of those values, we simulated samples from the models,
with the same size as the original sample, producing samples $\feta^n(\by_{i,l})$, and we
derived from those samples $\chi^2$ tests about the equality of the means. Fig.~\ref{fig:verif-gl} 
evaluates the impact of including the empirical mad within those summary statistics on the result of the $\chi^2$
test. The result of this simulation experiment (based on 100 replications) is quite satisfactory in that
in approximately $95\%$ of cases the difference between the two empirical means falls within the null hypothesis
acceptance interval associated with a $5\%$ error when the empirical mad is not included, therefore concluding on the
inappropriateness of the summary statistics to conduct the ABC model comparison.
On the opposite, the difference always is outside the null hypothesis acceptance interval when the empirical mad is included.

\begin{figure}[ht!]
\centerline{
\includegraphics[width=6truecm]{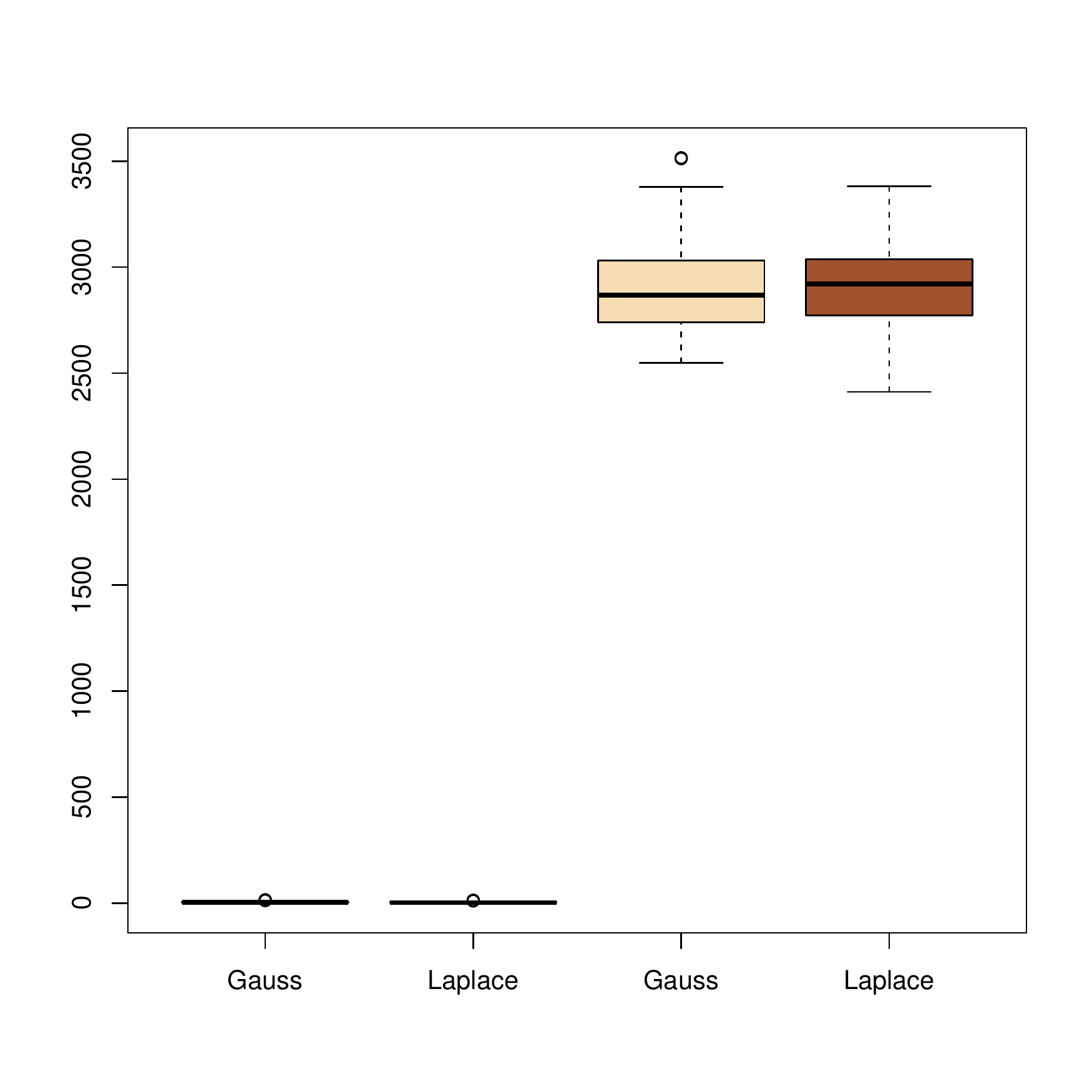}
\includegraphics[width=6truecm]{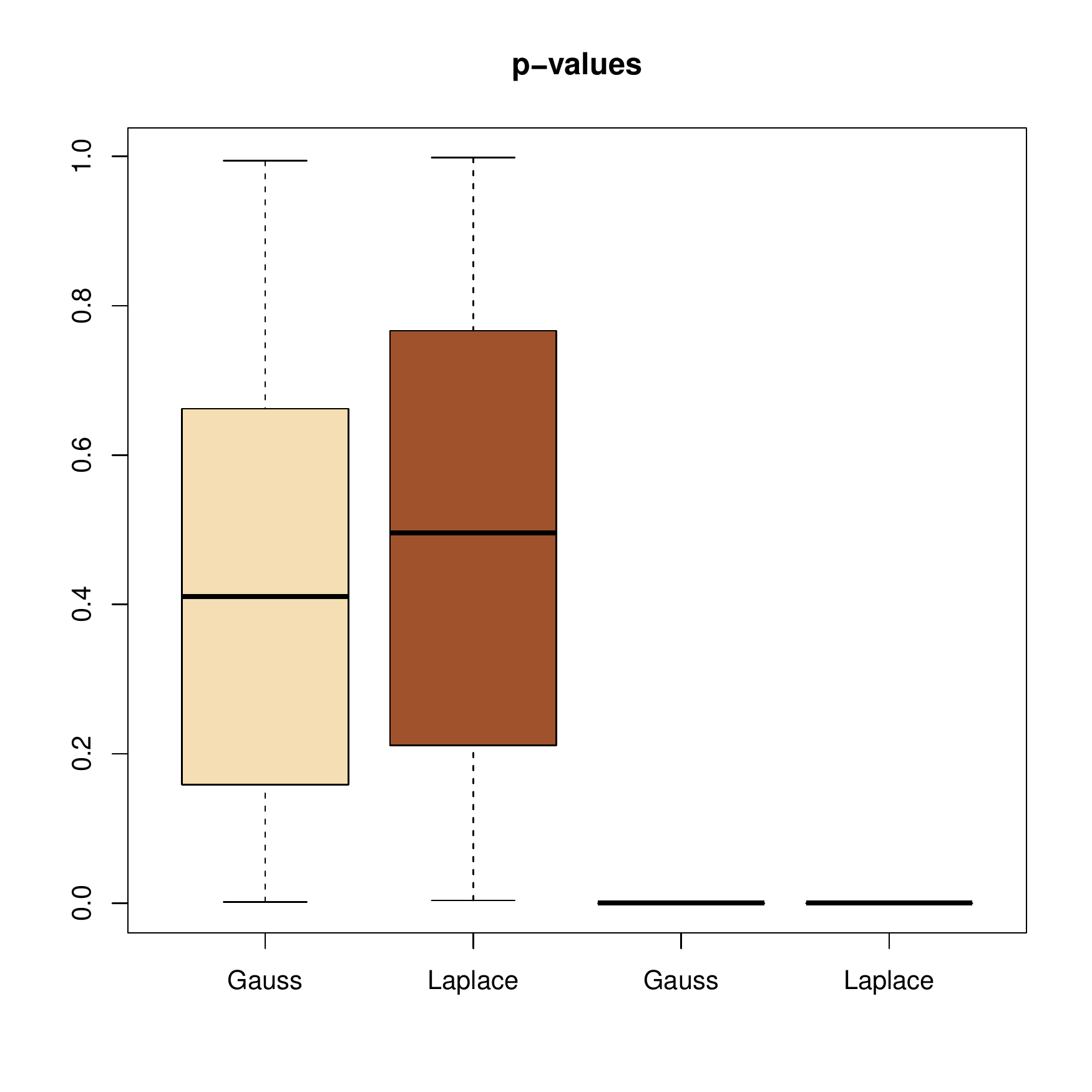}
}
\caption{Boxplot representation of the variability of the test statistics ({\it left}) and 
corresponding p-values ({\it right}) comparing the expectation of two vectors of summary statistics: 
the empirical mean, median and variance (left block within each graph) and the previous three ones 
plus the empirical mad (right block within each graph). 
} 
\label{fig:verif-gl}
\end{figure}

\subsection{Population genetics experiment}

For the population genetics example, we ran a comparison experiment between the
case when we use $(\delta\mu)^2_{1,2}$ as sole summary statistic and when we
use instead the vector $\left((\delta\mu)^2_{1,3},(\delta\mu)^2_{2,3}\right)$.
The results are presented in Fig. \ref{fig:verif-popgen}. For each of the $100$
points used for the boxplots, the data is made of $n=100$ loci and the
$t$-statistics are based on $500$ samples under each model.  In fact, the ABC
algorithm relies on a fixed reference table of $10^5$ proposals from the prior
and the respective model, and it selects the tolerance $\epsilon$ as the
$0.5\%$ quantile of the deduced simulation distances.  As for the previous
example, the result of this simulation experiment is quite satisfactory.  It
highlights the ability of our empirical procedure to detect inappropriate
summary statistics.  We can truly compare the expectation of a summary
statistics under both models using parameter values drawn from the ABC
posterior approximation.

\begin{figure}[ht!]
\centerline{
\includegraphics[width=6truecm]{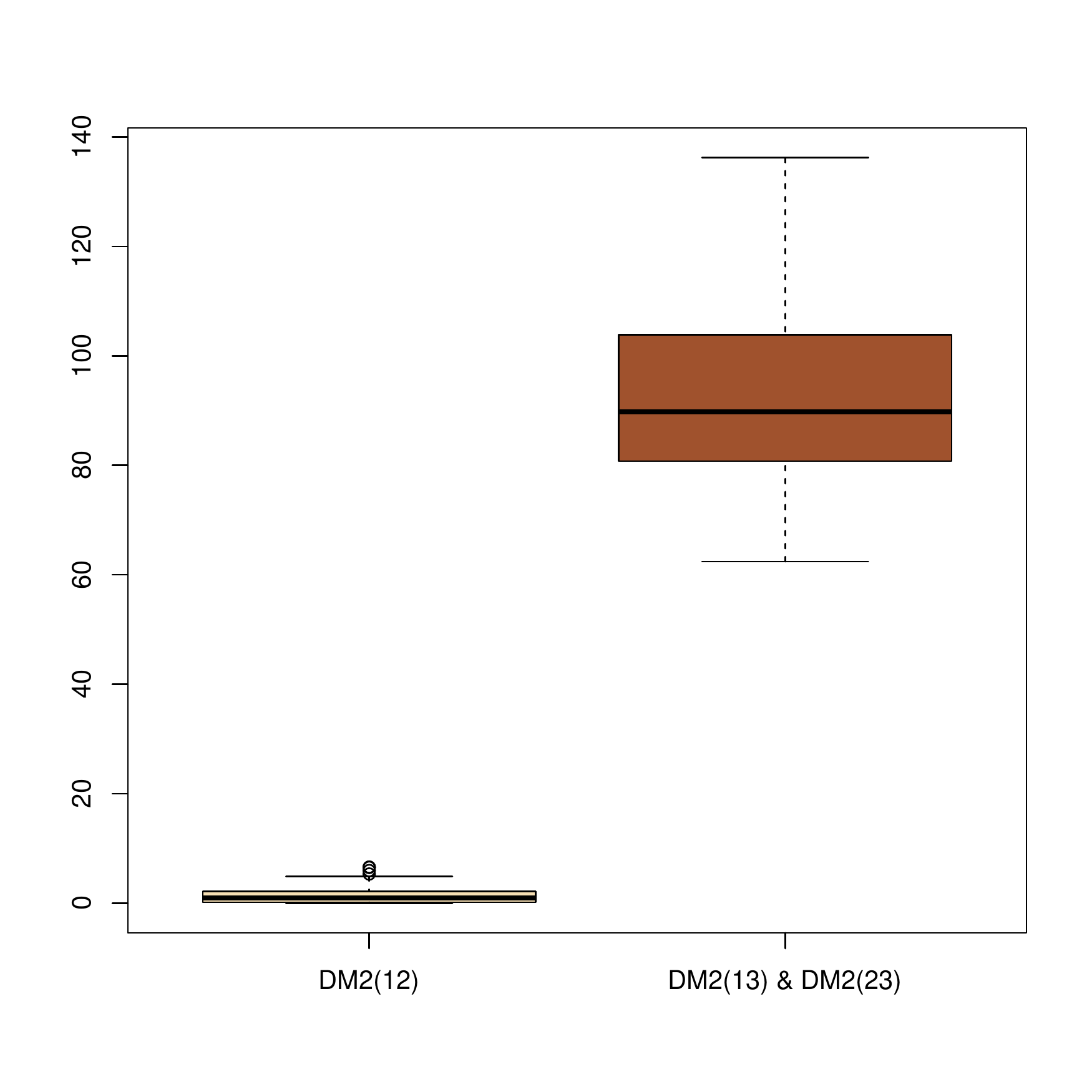}
\includegraphics[width=6truecm]{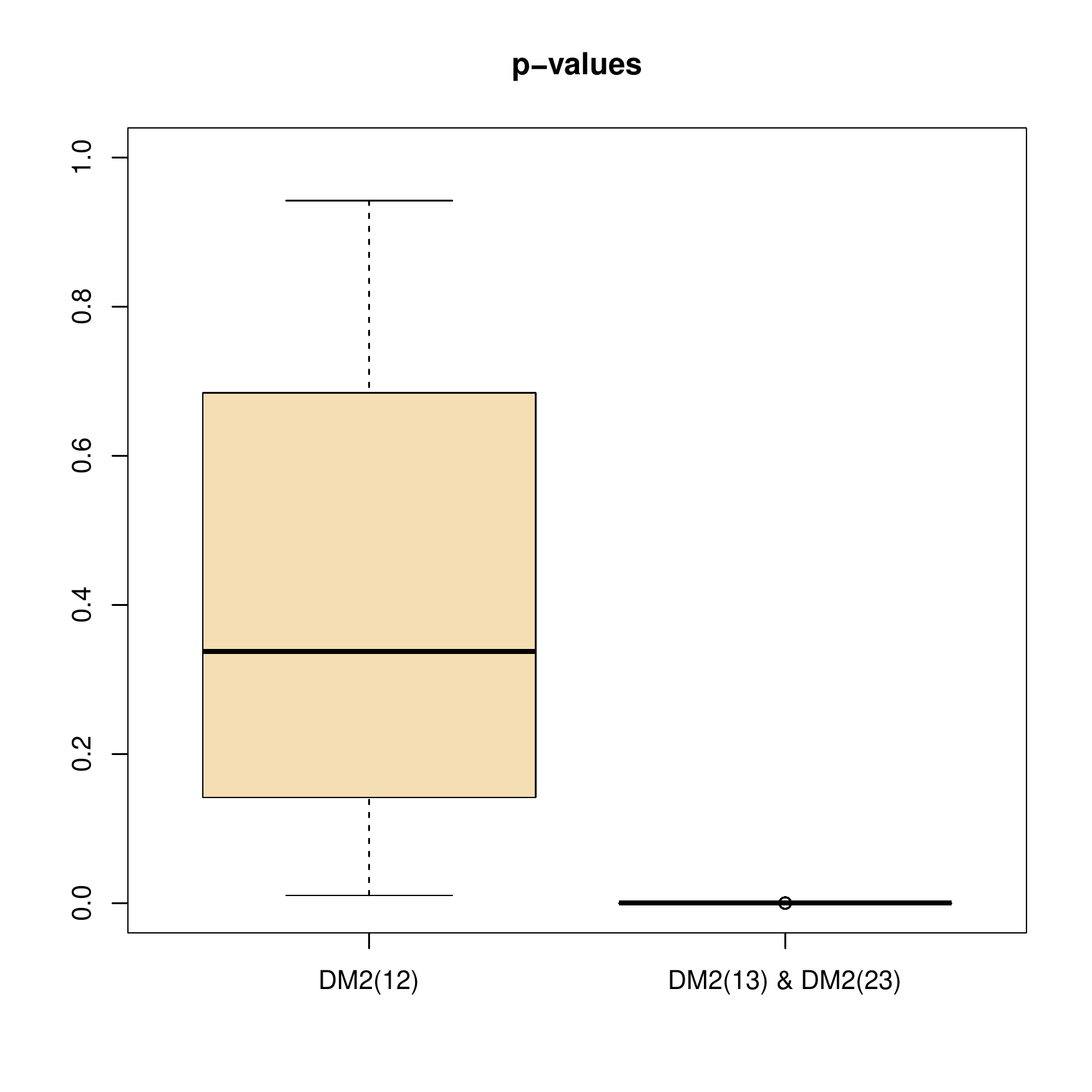}
}
\caption{Boxplot representation of the variability of the test statistics ({\it left}) and 
corresponding p-values ({\it right}) comparing the mean of the summary statistics $(\delta\mu)^2_{1,2}$ 
and $\left((\delta\mu)^2_{1,3},(\delta\mu)^2_{2,3}\right)$.} 
\label{fig:verif-popgen}
\end{figure}

\section{Discussion}\label{seccon}

This paper has produced sufficient conditions for a summary statistics to produce a consistent or an
inconsistent Bayesian model choice. It thus brings an answer to the question raised in
\cite{robert:cornuet:marin:pillai:2011}, which was warning the ABC community about the potential pitfalls of an
uncontrolled use of ABC approximations to Bayes factors.  The central condition that the true asymptotic mean
of the summary statistic should not be recovered under the wrong model if model choice is to take place (in a
convergent manner) is both natural, in that the asymptotic normality implies that only first moments matter,
and fundamental, in that it drives the choice of summary statistics in practical settings, first and foremost
for the ABC algorithm. Indeed, Theorem \ref{thm:BFcon} implies that estimation statistics should not be used in
ABC algorithms aiming at model comparison, unless their expectation can be shown to differ under both models.
This means that (a) different statistics should be used for estimation and for testing and (b) that they should
not be mixed in a single summary statistic. Note that the distinction differs from the sufficient versus
ancillary opposition found in classical statistics \citep{cox:hinkley:1974} in that it is enough that the
summary statistic $\feta^n$ has a different asymptotic mean under both models. In addition, and as shown in the
normal-Laplace example in Section \ref{subsec:GaLap}, some ancillary statistics may not be appropriate for
testing.

At a methodological level, the classification of summary statistics resulting from the present study is
paramount: when comparing models with a given range of potential summary statistics, the expectations of the
various summary statistics can be evaluated by simulation under all models. For instance, in ABC settings, the
production of pseudo-data is a requirement for the implementation of the method; it is therefore quite
straightforward to test via a preliminary experiment whether the condition of Theorem \ref{thm:BFcon} holds.

Neither the final choice of summary statistics as in
\cite{fearnhead:prangle:2012}, nor the comparison with alternative model
comparisons techniques such as $\text{ABC}_\mu$
\citep{ratmann:andrieu:wiujf:richardson:2009} are covered in the current paper.
These obviously are issues worth investigating and they constitute seeds for
future development in the area.

\section*{Acknowledgements} 
Part of this work was done when the second author (NSP) was visiting Dauphine and CREST and he thanks 
both institutions for their warm hospitality. All authors but the second author are partially supported 
by the Agence Nationale de la Recherche (ANR, 212, rue de Bercy 75012 Paris) through the 2009--2013 
projects {\sf Bandhits} and {\sf Emile}.  The second author is partially supported by the NSF grant 
1107070. The authors are grateful to the whole editorial panel for their supportive and constructive comments
throughout the editorial process. Discussions with Dennis Prangle at various stages of the paper were quite
helpful.

\appendix
\section*{Appendix 1}
\subsection*{Proof of Lemma \ref{thm:normasympt}}

\small
Recall that $G_n$ is the true distribution of $\feta^n$.
Let us first assume that $\inf \{ |\mu_0 -\mu_i(\theta_i)|; \theta_i \in \Theta_i \} = 0$ and let  $S_{n,i}$ be 
defined as in (\ref{A4}). Note that from (\ref{A1}), for all $\delta >0$, there exists $M_\delta>0$ such that 
for $n$ large enough
\begin{equation}\label{devGN}
G_n(|\feta^n-\mu_0|>M_\delta /v_n ) <\delta .
\end{equation}
and note that $M_\delta $ goes to infinity as $\delta$ goes to 0. Let $\epsilon>0$ and consider the set $E_n$, and the positive constants $U_\epsilon$ and $\delta_\epsilon$ defined in (\ref{A5}).
From \eqref{eqn:ML} we have  for all $\feta^n \in E_n$,
\begin{equation*}\label{eqn:misecond}
\begin{split}
m_i(\feta^n)& \geq \int_{S_{n,i}(U_\epsilon)} g_i(\feta^n |
\theta_i)\,\pi_i(\theta_i)\,\text{d}\theta_i\, \\
&\geq \delta_\epsilon g_n(\feta^n)\pi_i( S_{n,i}(U_\epsilon) ) 
 \geq c_\epsilon g_n(\feta^n)v_n^{-d_i}
\end{split}
\end{equation*}
for some positive constant $c_\epsilon$, where the second inequality follows from the definition of $E_n$ 
and the last from (\ref{A4}). Formally, since $G_n( E_n) \geq 1 - \epsilon $, there exists 
$c_\epsilon>0$ such that for $n$ large enough
\begin{eqnarray} \label{lb}
G_n\left[ m_i(\feta^n)  \geq 
c_\epsilon\, g_n(\feta^n) v_n^{-d_i} \right]\geq  1 - \epsilon.
\end{eqnarray}

We now obtain an upper bound for $m_i(\feta^n)$. Using \eqref{eqn:ML} we write,
\begin{eqnarray*}
m_i(\feta^n) &=& \int_{\mathcal F_{n,i}} g_i(\feta^n |\theta_i)\,\pi_i(\theta_i)\,d\theta_i + \int_{\mathcal F_{n,i}^c} g_i(\feta^n |\theta_i)\,\pi_i(\theta_i)\,d\theta_i \;.
\end{eqnarray*}
As before fix $\delta>0$ and let $M_\delta$ be defined as in \eqref{devGN}.
Applying Markov inequality and Fubini's theorem,  we obtain that, for all $\epsilon>0$, 
\begin{equation} \label{Fnc}
\begin{split}
& G_n\Big(\int_{\mathcal F_{n,i}^c} g_i(\feta^n |\theta_i)\,\pi_i(\theta_i)\,d\theta_i >\epsilon\, g_n(\feta^n)\,\pi_i(S_{n,i}) \Big) \\
&\leq G_n[ |\feta^n -\mu_0| > M_\delta v_n^{-1} ] \\
&  +  \int_{\mathcal F_{n,i}^c }\int_{v_n|t - \mu_0|\leq M_\delta} \frac{1}{\epsilon\,g_n(t)\,\pi_i(S_{n,i})} 
g_n(t) g_i(t|\theta_i) dt  \,\pi_i(\theta_i)\,d\theta_i   \\
&\leq G_n[ |\feta^n -\mu_0| > M_\delta v_n^{-1} ] \\
&  +  \int_{\mathcal F_{n,i}^c }\frac{1}{\epsilon\,\pi_i(S_{n,i})} \int_{\R^d} g_i(t|\theta_i) dt  \,\pi_i(\theta_i)\,d\theta_i   \\
& \leq \delta + \frac{\pi(\mathcal F_{n,i}^c )}{\epsilon \pi_i(S_{n,i})} \leq 2 \delta,
\end{split}
\end{equation}
given the conditions imposed by (\ref{A3}) and (\ref{A4}), when $n$ is large enough.

We can represent $\mathcal F_{n,i}$ as a finite disjoint union of the following sets:
\begin{eqnarray*}
\mathcal{F}_{n,i} &=& \bigcup_{j=0}^{J_n + 1} \mathcal{H}_j, \quad J_n = J_0 v_n, \quad \mathrm{for} \,\, \mathrm{some} \,\, J_0 \in \mathbb{N} \,, \\
\mathcal{H}_j &=&S_{n,i}((j+1) M_\delta) \cap S_{n,i}( jM_\delta)^c,  \quad j \leq J_n \,, \\
\mathcal{H}_{J_n+1} &=&\mathcal{F}_{n,i} \cap S_{n,i}^c(M_\delta J_n) \;.
\end{eqnarray*}
Now we have
\begin{equation}\label{decomp:j}
 \int_{\mathcal F_{n,i}} g_i(\feta^n |\theta_i)\,\pi_i(\theta_i)\,d\theta_i =  \sum_{j=0}^{J_n+1} \int_{\mathcal{H}_j } g_i(\feta^n |\theta_i)\,\pi_i(\theta_i)\,d\theta_i \;.
 \end{equation} 
 Set $c_0 = \sum_{j=2}^\infty j^{-( \alpha_i - d_i)/2} < + \infty $ since $\alpha_i > 2+ d_i$. Then, 
 if $j =0$, $\mathcal{H}_0 =S_{n,i}( M_\delta)$ and if $K$ is a constant such that $K>d_i$ we obtain
\begin{eqnarray}
G_n\left[ \int_{S_{n,i}( M_\delta)} g_i(\feta^n |\theta_i)\,\pi_i(\theta_i)\,d\theta_i > M_\delta^K g_n(\feta^n)v_n^{-d_i }\right] &\leq &  \frac{1}{ M_\delta^K v_n^{-d_i} } \pi_i(S_{n,i}(M_\delta)) + \delta \nonumber \\
&=& O( M_\delta^{d_i-K}) + \delta, \label{eqn:Mdeltadi}
\end{eqnarray}
where the last inequality follows from \eqref{eqn:A5e2} in (\ref{A4}). Since $\limsup_{\delta \rightarrow 0}M_\delta = \infty$, the bound in \eqref{eqn:Mdeltadi} goes to $0$ as $\delta$ goes to zero. 
Using (\ref{A3})  and following exactly the same argumentation as for \eqref{Fnc},
i.e.~Markov inequality and Fubini's theorem, we obtain that for $0<j \leq J_n$,
\begin{equation}\label{Fncbis}
\begin{split}
&
G_n \Big( \Big\{\int_{\mathcal{H}_j} g_i(\feta^n |\theta_i)\,\pi_i(\theta_i)\,d\theta_i > M_\delta^K g_n(\feta^n)v_n^{-d_i }j^{(d_i - \alpha_i)/2}\Big\} \cap \Big\{ v_n|\feta^n -\mu_0 | \leq M_\delta/2 \Big\} \Big) \\
&\leq  \frac{ c_0v_n^{d_i} }{ M_\delta^Kj^{(d_i - \alpha_i)/2} } \int_{\mathcal{H}_j}G_{i,n}\left( |\feta^n -\mu( \theta_i) | > (j-1/2)M_\delta v_n^{-1} |\theta_i \right) \,\pi_i(\theta_i)\,d\theta_i \\
&\lesssim  M_\delta^{d_i-\alpha_i-K} j^{(d_i-\alpha_i)/2} 
\end{split}
\end{equation}
for $n$ large enough, and similarly 
\begin{equation} \label{bound:Jn}
\begin{split}
& G_n\Big(\int_{\mathcal{H}_{J_n+1} } g_i(\feta^n |\theta_i)\,\pi_i(\theta_i)\,d\theta_i> g_n(\feta^n) v_n^{-d_i } \Big)\\
&\leq   v_n^{d_i}  \int_{\mathcal{H}_{J_n+1} }G_{i,n}\left( |\feta^n -\mu( \theta_i) | > J_0/2 |\theta_i \right) \,\pi_i(\theta_i)\,d\theta_i \\
   &  \qquad+ 
G_{n}\left( v_n|\feta^n -\mu_0 | >M_\delta \right) \\
&\leq   \delta + O( v_n^{d_i -\alpha_i}) \leq 2 \delta,
\end{split}
\end{equation}
for $n$ large enough, under assumption (\ref{A4}).
Combining the above inequalities \eqref{Fnc}, \eqref{Fncbis}, and \eqref{bound:Jn}
with \eqref{decomp:j}, we obtain for $n$ large enough,
\begin{equation*}
\begin{split}
 & G_n\Big(\int_{\mathcal F_{n,i}} g_i(\feta^n |\theta_i)\,\pi_i(\theta_i)\,d\theta_i > (2M_\delta^K + 1) g_n(\feta^n) v_n^{-d_i}\Big) \\
 & \leq   G_{n}\left( v_n|\feta^n -\mu_0 | > \frac{1}{2}M_\delta  \right)  + O(M_\delta^{d_i-K} )
\end{split}
\end{equation*}
which can be made arbitrarily small by choosing $\delta $ small enough. Combining the above with  \eqref{Fnc} implies that
$$
\int_{\Theta_i}\frac{ g_i(\feta^n |\theta_i)}{ g_n(\feta^n) }\pi_i(\theta_i)\,d\theta_i = O_{\P^n}( v_n^{-d_i}  ) \;.
$$
The above estimate together with the lower bound obtained in \eqref{lb}
proves the first claim (Equation \eqref{th1:1}) of Lemma \ref{thm:normasympt}.

Now suppose $\inf\{ |\mu_i(\theta_i) - \mu_0| ; \theta_i \in \Theta_i \} >  0$. 
Then  there exists $j_0 >0$ such that $S_{n,i}( j\,v_n) = \emptyset$ for all $j \leq j_0$.
A computation identical to \eqref{bound:Jn}, together with \eqref{Fnc}, yields that for 
all sequences $w_n$ going to infinity and all $\epsilon>0$,
{\small \begin{eqnarray*}
&          & G_n\Big(\int_{\mathcal F_{n,i}} g_i(\feta^n |\theta_i)\,\pi_i(\theta_i)\,d\theta_i >  g_n(\feta^n)(\epsilon v_n^{-\tau_i }+ w_nv_n^{-\alpha_i})\Big) \\
& \leq & G_{n}\left( v_n|\feta^n -\mu_0 | >M_\delta  \right)  + \frac{ \pi(\mathcal F_{n,i}^c )}{ \epsilon v_n^{-\tau_i }} \\ 
&          & + \frac{ v_n^{\alpha_i} }{w_n \epsilon } \int_{\mathcal F_{n,i}} G_{i,n} \left( |\feta^n - \mu_i(\theta_i)| > j_0 v_n/2  \right)  \,\pi_i ( \theta_i) \,d\theta_i\\
& \leq     & 2  \delta,
\end{eqnarray*}}
for $n$ large enough. This proves the second claim (Equation \eqref{th1:2}) of Lemma
\ref{thm:normasympt}.\hspace{1 cm} $\Box$ 

\normalsize

\end{document}